\newtheorem{proposition}{Proposition}[section]
\newtheorem{theorem}[proposition]{Theorem}
\newtheorem{lemma}[proposition]{Lemma}
\newtheorem{corollary}[proposition]{Corollary}
\newtheorem{definition}[proposition]{{Definition}}
\newenvironment{defn}{\begin{definition} \rm}{\end{definition}}
\newtheorem{remark}[proposition]{{Remark}}
\newtheorem{Example}[proposition]{Example}
\newcommand{\cB}{{\mathcal B}}
\newcommand{\cN}{{\mathcal N}}
\newcommand{\cT}{{\mathcal T}}
\newcommand{\cG}{{\mathcal G}}
\newcommand{\cE}{{KQeKQ}}
\newcommand{\ma}{{\mathfrak a}}
\renewcommand{\dim}{\operatorname{dim}\nolimits}
\newcommand{\grb}{Gr\"obner\ }
\newcommand{\tip}{\operatorname{tip}}
\newcommand{\Span}{\operatorname{Span}}
\newcommand{\Imo}{I_{Mon}}
\newcommand{\extto}{\xrightarrow}
\newcommand{\wh}[1]{\widehat{v_1+ \cdots+ v_{#1}}}
\newcommand{\whe}{_{\widehat{e}}}
\definecolor{candyapplered}{rgb}{1.0, 0.03, 0.0}
\begin{document}

\date{today}
\title[On Quasi-hereditary algebras]
{On quasi-hereditary algebras}

\author[Green]{Edward L.\ Green}
\address{Edward L.\ Green, Department of
Mathematics\\ Virginia Tech\\ Blacksburg, VA 24061\\
USA}
\email{green@math.vt.edu}
\author[Schroll]{Sibylle Schroll}
\address{Sibylle Schroll\\
Department of Mathematics \\
University of Leicester \\
University Road  \\
Leicester LE1 7RH, UK
}
\email{schroll@le.ac.uk}

\subjclass[2010]{16G20, 
16G10  % Representations of Artinian rings%Repns of quivers and posets
%16S37, %Quadratic and Koszul algebras
%16E05, %Syzygies, resolutions, complexes
%16E30. %Homological functors on modules (Tor, Ext, etc.)
%14M05 % Varieties defined by ring conditions
%16S15 %Finite generation, finite presentability, normal forms 
%16W50%Graded rings and modules
16D10% General module theory
}
\keywords{}
\thanks{The  authors were partially supported by an LMS scheme 4 grant. The second author is supported by the EPSRC through an Early Career Fellowship EP/P016294/1}

\begin{abstract}
 Establishing
whether an algebra is quasi-hereditary or not is, in general, a difficult problem. In this paper we introduce a sufficient criterion to determine whether a general finite dimensional algebra is quasi-hereditary by showing that the question can be reduced to showing that a closely associated monomial algebra is quasi-hereditary. For monomial algebras, we give an explicit, easily verifiable, necessary and sufficient criterion to determine whether it is quasi-hereditary.  
\end{abstract}
\date{\today}
\maketitle

\section{Introduction}

Quasi-hereditary algebras are ubiquituous in representation theory. For example, they appear as Schur algebras in the representation theory of the symmetric group, as $q$-Schur algebras in the context of finite groups of Lie type, and more generally  hereditary algebras and algebras of global dimension two are quasi-hereditary.
Quasi-hereditary algebras were first introduced in \cite{Sc}, followed by a detailed study in the context of  highest weight categories and the representation theory of Lie algebras  in \cite{CPS}. There it is shown that every highest weight category with finite weight poset and all objects of finite lengths is the category of finite dimensional modules over a quasi-hereditary algebra. Furthermore, it is shown that a finite dimensional algebra is quasi-hereditary if and only if its module category is a highest weight category. 

Highest weight categories play an important role in many areas of mathematics. For example, although only formally defined in \cite{CPS}, they connect representation theory with geometry in the work of Beilinson and Bernstein \cite{BB} and Brylinski and Kashiwara \cite{BK} linking perverse sheaves and highest weight representation theory  in their proof of the Kazhdan-Lusztig conjecture.  The work of Dlab and Ringel \cite{DR} provides a more algebraic ring theoretic approach to quasi-hereditary algebras connecting it in \cite{DR2} to highest weight categories.

Every finite dimensional algebra  over an algebraically closed field is Morita equivalent to a quotient of a path algebra by an admissible ideal and every such quotient of a path algebra has an associated monomial algebra (see Section 2 for details).  A monomial algebra, is a quotient of a path algebra by an ideal generated by paths (as opposed to linear combinations of paths). For example, in the case of a quotient of a free algebra, a monomial algebra is given by a quotient by an ideal generated by (non-commutative) monomials. 
The properties of the original algebra are closely linked to the properties of the associated monomial algebra, see for example recent work \cite{CS, GHS}. 

While quasi-hereditary algebras are an important class of algebras, establishing whether  an algebra is quasi-hereditary or not is, in general, a difficult problem.   In this paper we give an explicit necessary and sufficient criterion to determine whether a monomial algebra is quasi-hereditary. Based on this result, we give a sufficient criterion to determine whether a general finite dimensional algebra is quasi-hereditary. 
Beginning with \cite{DR} in the early 1990s, a purely algebraic approach to the study of quasi-hereditary algebras has evolved as an active area of research with exciting new results appearing in recent years, for example, \cite{BLvdB, HP, I, IR,  RvdB, R}.

The structure of the paper is as follows. We begin  by briefly recalling  the definition of quasi-hereditary algebras and the basic facts of non-commutative \grb basis theory  including the notion of the associated monomial algebra in Section 2. 
In Section 3 we present an easily verifiable algorithm to determine whether or not a monomial algebra is quasi-hereditary. In Section 4,  we  prove that an algebra is quasi-hereditary  if its associated monomial algebra is quasi-hereditary.

{\bf Acknowledgments:} We thank the referee for helpful comments and suggestions.

\section{Notations and summary of known results}

 Throughout this paper we assume that $K$ is a field,  $Q$ is a quiver, and that, unless otherwise stated,  every $K$-algebra of the form $KQ/I$ is such that $I$ is an admissible ideal in $KQ$.  For a $K$-algebra $\Lambda$,  denote by $J(\Lambda)$ the Jacobson radical of $\Lambda$.   Unless otherwise stated, all modules are finitely generated right $\Lambda$-modules. 

 \subsection{Quasi-hereditary algebras} We begin by recalling some  definitions and results on quasi-hereditary algebras. 

A two-sided ideal $L$ in $\Lambda$ is called {\it heredity} if 
\begin{enumerate}
\item $L^2 = L $
\item  $ L J(\Lambda) L = 0$
\item $L$ is projective as a left or right $\Lambda$-module.
\end{enumerate} 

An algebra $\Lambda$ is {\it quasi-hereditary} if there exists a chain of two-sided ideals 
\begin{equation}\label{heredity chain}0 = L_0 \subset L_{1} \subset \ldots \subset L_{i-1} \subset L_i \subset \ldots \subset L_n = \Lambda\end{equation}
such that $L_i / L_{i-1} $ is a heredity ideal in $\Lambda / L_{i-1} $, for all $i$. We call the sequence in (\ref{heredity chain}) a {\it heredity chain} for $\Lambda$. 

Recall that the {\it trace ideal of a $\Lambda$-module $M$} in $\Lambda$ is the two-sided ideal generated by the images of homomorphisms of $M$ in $\Lambda$. By \cite{APT}, a two-sided ideal $\ma$ in $\Lambda = KQ/I$ is such that $\ma^2 = \ma$ if and only if $\ma$ is the trace ideal of a projective $\Lambda$-module $P$ in $\Lambda$.   There exists a
set   $S$ of distinct vertices in   $Q_0$ such that 
we may assume that $P = \sum_{v \in S} v \Lambda$. 
Hence the trace of $P$ in $\Lambda$ is the two-sided ideal generated by $S$, that is, it is the ideal $\Lambda e \Lambda$ where $e = \sum_{v \in S} v$. See also Statement (6) in \cite{DR}.

Therefore $\Lambda = KQ/I$ is quasi-hereditary 
 if there  exists a sequence of idempotents $e_1, \ldots, e_n$  in $\Lambda$ such that

$$0  \subset \Lambda e_1 \Lambda \subset \ldots \subset \Lambda e_{i-1} \Lambda \subset \Lambda e_i  \Lambda \subset \ldots \subset   \Lambda e_n  \Lambda = \Lambda$$
and such that $\Lambda e_i \Lambda  / \Lambda e_{i-1}  \Lambda$ is a heredity ideal in $\Lambda / \Lambda e_{i-1} \Lambda $, for all $i$.

Note that by Statement (7) in \cite{DR}, for $\Lambda e \Lambda$ where $e$ is an idempotent  in $\Lambda$ such that $ e J(\Lambda) e = 0$, 
 $\Lambda e \Lambda$ is projective as a left $\Lambda$-module  if and only if the map $ \Lambda e \otimes_{e \Lambda e} e \Lambda \to \Lambda e \Lambda$ is bijective. 

\subsection{Non-commutative \grb  basis theory} We now  recall  some  non-commutative \grb basis theory for path algebras needed for our results. Note that in this subsection only, the ideals considered need not be admissible.  For more details on non-commutative \grb basis theory see \cite[Chapter 2]{G1} or the more detailed summary in Sections 2 and 3 in  \cite{GHS}.

 Denote by   $\cB$ the set of finite (directed) paths in a quiver $Q$.  We view the vertices of $Q$ as paths of length zero and so they are elements in $\cB$. Thus  $\cB$ is a $K$-basis for $KQ$.

A nonzero element $x \in KQ$ is \emph{uniform} if $vxw=x$ for  vertices $v, w \in Q_0$. We set  $v = \mathfrak o(x)$ and call it the \emph{origin vertex} of $x$. Similarly,  we set $w= \mathfrak e(x)$, and
call it the \emph{end vertex} of $x$. Since $1=\sum_{v\in Q_0}v$, every
nonzero element of $KQ$ is a sum of uniform elements; namely
$x=\sum_{u,v\in Q_0}uxv$.

To define a  Gr\"obner basis theory,  we need  the notion of an admissible order on $\cB$. 

\begin{defn}
An \emph{admissible order} $\succ$ on $\cB$ is a well-order on $\cB$ that preserves non-zero left and right multiplication and such that if  $p = rqs$, for  $p,q,r,s  \in \cB$  then $p  \succeq q$.
\end{defn}

Recall that a well-order is a total order such that every non-empty subset has a minimal element. 

An example of an admissible order is the left or right length-lexicographical order. 

We now fix an admissible order $\succ$ on $\cB$. 
The order $\succ$ enables us to find the largest path occuring in an element in $KQ$.
 We also can find the largest paths  that occur in elements of a subset of $KQ$.

\begin{defn} If $ x = \sum_{ p \in \cB} \alpha_p p$, with $\alpha_p\in K$, almost all $\alpha_p=0$,  and $x \neq 0$ then define the {\it tip of $x$} to be

\begin{center} $\tip (x) = p$ if $\alpha_p\ne 0$ and $p \succeq q$ for all $q$ with $ \alpha_q\ne 0$. \end{center}

If $X \subseteq KQ$ then $$\tip(X) = \{ \tip(x) | x \in X \setminus \{0\} \}.$$
\end{defn}

 For $p,q\in \cB$, we say that $p$ is a subpath of $q$ and write $p| q$,   if there exist $r,s\in\cB$ such that
$q=rps$. For $A \subset KQ$, we denote by $\langle A \rangle$  the ideal generated by $A$.

 \begin{defn}
 Let $I$ be an ideal in $KQ$.  A subset $\cG$ of uniform elements  in $I$ is a \emph{Gr\"obner basis} for $I$ (with respect to $\succ$) if  $$\langle \tip (I) \rangle = \langle \tip (\cG) \rangle. $$ 
 \end{defn}

 Equivalently, a subset $\cG$ of $I$
consisting of uniform elements, is a \grb basis for $I$ with respect to $\succ$ if, for 
every $x \in I$, $ x \neq 0$, there exists a $g \in \cG$ such that $\tip (g)\vert\tip(x)$.

 It can be shown that 

\begin{proposition}\cite[Proposition 2.4]{GHS} If $\cG$ is a \grb basis for  an ideal $I$ of $KQ$, then $\cG$ is a generating set for $I$;  that is,
$$ \langle \cG \rangle = I.$$
\end{proposition}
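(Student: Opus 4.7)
The plan is a standard minimal-counterexample argument driven by the well-order $\succ$. First I would set $J = \langle \cG \rangle$; since $\cG \subseteq I$ we immediately have $J \subseteq I$, so the task is to prove the reverse inclusion. Assume for contradiction that $I \setminus J \neq \emptyset$. The image $\tip(I \setminus J)$ is then a non-empty subset of $\cB$, so by the well-order property of $\succ$ I can select $x \in I \setminus J$ whose tip $p = \tip(x)$ is minimal.

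Next I would invoke the Gr\"obner basis hypothesis to reduce $x$. Since $p \in \tip(I) \subseteq \langle \tip(\cG) \rangle$, there exist $g \in \cG$ and paths $r, s \in \cB$ with $p = r \tip(g) s$. Uniformity of $g$ guarantees that $r$ ends at $\mathfrak{o}(g)$ and $s$ begins at $\mathfrak{e}(g)$, so every path in the support of $g$ composes non-trivially with $r$ on the left and $s$ on the right; in particular $rgs \neq 0$. Preservation of $\succ$ under non-zero multiplication then forces $\tip(rgs) = r \tip(g) s = p$, with leading coefficient equal to the leading coefficient $\beta$ of $g$. Choosing $\lambda \in K$ so that $\lambda \beta$ equals the coefficient of $p$ in $x$, I form $x' = x - \lambda rgs$. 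By construction the tip-term $p$ cancels, so either $x' = 0$ or $\tip(x') \prec p$. Moreover $x' \in I$ because $x, rgs \in I$, and $x' \notin J$ because $rgs \in J$ and $x \notin J$. If $x' = 0$ then $x = \lambda rgs \in J$, a contradiction; if $x' \neq 0$ then $x' \in I \setminus J$ has tip strictly less than $p$, contradicting the minimality of $\tip(x)$.

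There is no serious obstacle here; the main thing to be careful about is the book-keeping in the reduction step, namely (i) that $rgs$ is actually non-zero, which is where uniformity of the elements of $\cG$ is used, and (ii) that $\tip(rgs) = p$ with the claimed coefficient, which is exactly the admissibility axiom $p \succ q \Rightarrow rps \succ rqs$ (whenever both products are non-zero) applied term by term to $g$. Both checks are immediate from the definitions recalled earlier in the section.
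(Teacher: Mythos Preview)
Your argument is correct and is the standard minimal-counterexample/division-algorithm proof that one finds in the Gr\"obner basis literature. Note, however, that the paper does not supply its own proof of this proposition: it is stated with a citation to \cite{G1} and no proof is given in the paper itself, so there is nothing to compare against beyond observing that your argument is precisely the expected one from that reference.
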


 An ideal in $KQ$ is   called  \emph{monomial} if it can be generated by monomials. Recall that the \emph{monomial} elements  in $KQ$ are the elements of $\cB$. 
  If $\Lambda=
KQ/I$ and $I$ is a monomial ideal, then we say that $\Lambda$ is 
a \emph{monomial algebra}.  We recall the following well-known facts about monomial ideals. 
 
 \begin{proposition}\cite[Proposition 2.4]{G1}\label{prop-mono} Let $I$ be a monomial ideal in $KQ$. Then 
 
 (1) there is a unique minimal set $\cT$ of monomial generators for $I$ and $\cT$ is a \grb basis for $I$ for any admissible order on $\cB$. 

 (2)  if $x = \sum_{ p \in \cB} \alpha_p p$ then $x \in I$ if and only if $p \in I$ for all $p$ such that $\alpha_p \neq 0$. 
 \end{proposition}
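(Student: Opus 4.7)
The plan is to prove part (2) first, since it is the foundational structural fact, and then to use it to derive both claims in (1).

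For part (2), I would start from the hypothesis that $I$ is generated as a two-sided ideal by some set of monomials $\cS \subseteq \cB$. Then every element of $I$ is a $K$-linear combination of products $rts$ with $t \in \cS$ and $r,s \in \cB$, and each such product is itself a single path in $\cB$. Hence the set $\cB \cap I$ of all paths lying in $I$ spans $I$ as a $K$-vector space. Being a subset of the $K$-basis $\cB$ of $KQ$, it is automatically linearly independent, so $\cB \cap I$ is a $K$-basis of $I$. If $x = \sum_{p\in\cB} \alpha_p p$ lies in $I$, then the expression of $x$ in the basis $\cB$ must agree with its (unique) expression in the subbasis $\cB \cap I$; hence $\alpha_p \neq 0$ forces $p \in I$. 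The converse direction is immediate because $I$ is closed under $K$-linear combinations.

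For part (1), I would define $\cT$ intrinsically as the set of all monomials $p \in I$ such that no proper subpath of $p$ (i.e., no $q$ with $p = rqs$ and $r, s$ not both vertices) lies in $I$. Well-ordering by path length shows that every monomial in $I$ admits a factorization $p = rts$ with $t \in \cT$, so $\cT$ generates $I$ as a two-sided ideal. For minimality and uniqueness: let $\cT'$ be any monomial generating set of $I$ and let $t \in \cT$. Since $t \in \langle \cT' \rangle$ and since, as in the proof of (2), $\langle \cT' \rangle$ is spanned by paths of the form $r t' s$ with $t' \in \cT'$, uniqueness of representation in $\cB$ forces $t = rt's$ for some $t' \in \cT'$ and some $r, s \in \cB$. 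The defining property of $\cT$ now forces $r$ and $s$ to be trivial, so $t = t' \in \cT'$. This gives $\cT \subseteq \cT'$, proving both that $\cT$ is the unique minimal monomial generating set.

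It remains to show that $\cT$ is a \grb basis for $I$ with respect to an arbitrary admissible order $\succ$. Since every $t \in \cT$ is a single monomial, $\tip(t) = t$, so $\langle \tip(\cT) \rangle = \langle \cT \rangle = I$, and in particular $\langle \tip(\cT) \rangle \subseteq \langle \tip(I) \rangle$. For the reverse inclusion, take any $x \in I \setminus \{0\}$; by part (2), $\tip(x) \in I$, and the factorization argument from the previous paragraph yields $\tip(x) = r t s$ with $t \in \cT$, so $\tip(x) \in \langle \cT \rangle = \langle \tip(\cT) \rangle$. Notice that neither $\cT$ nor the argument depends on the chosen admissible order, which is precisely the "for any admissible order" strength of the claim. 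The only real obstacle is the initial observation that a two-sided monomial ideal has a $K$-basis consisting of paths; once that is in hand, everything else reduces to induction on path length and uniqueness of expansion in the basis $\cB$.
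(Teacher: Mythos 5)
Your proof is correct, and it is the standard argument; note, however, that the paper itself offers no proof of this proposition — it is cited directly from~\cite{G1} — so there is no in-text argument to compare against. Your organization (deduce from a monomial generating set that $\cB\cap I$ is a $K$-basis of $I$, establish part (2), then characterize $\cT$ as the paths of $I$ with no proper subpath in $I$ and use induction on length plus uniqueness of expansion in $\cB$) is exactly what one would expect from the source. One small wording quibble: the clause ``in particular $\langle\tip(\cT)\rangle\subseteq\langle\tip(I)\rangle$'' does not really follow from the preceding equality $\langle\tip(\cT)\rangle=I$; the inclusion instead follows immediately from $\cT\subseteq I\setminus\{0\}$, hence $\tip(\cT)\subseteq\tip(I)$. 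The remainder of the Gr\"obner-basis verification is fine.
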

 
 Let $I$ be an admissible ideal in $KQ$. Set $\cN = \cB \setminus \tip (I)$.  Note that it follows that we also have  $  \cN =\cB\setminus \langle \tip(I) \rangle$: 
  Clearly $\cB\setminus  \langle \tip(I) \rangle  $ is a subset of $ \cB\setminus \tip(I)$.
Now let $p$ be in $\cB\setminus \tip(I)$.    Suppose $p  \in  \langle \tip(I) \rangle$. Then there
are paths $ s_i, u_i \in \cB$ and $t_i \in \tip( I)$ such that $p=\tip(\sum_i s_it_iu_i)$.  Hence
$p=s_it_iu_i$ for some $i$.  There exists $x\in I$ such that $\tip(x)=t_i$.
Hence $s_ixu_i\in I$ and $\tip(s_ixu_i)=\tip(s_it_iu_i)=p$. Thus $p \in \tip(I)$,  a contradiction.
In the following let $\cT$ be the minimal set of monomials that generates the 
monomial ideal {$\langle \tip (I) \rangle$.}
 It follows from the above that we have   $$ \cN = \{ p \in \cB \mid t \mbox{ is not a subpath of $p$ for all $t \in \cT$}\}.$$
 
 The following result is of central importance.
 
 \begin{lemma} \rm{ \bf{(Fundamental Lemma)}} \cite[Paragraph after Definition 2.4]{G1}
 Let $I$ be an ideal in $KQ$. Then  there is a  $K$-vector space isomorphism 
 $$ KQ \simeq I \oplus \mbox{ Span}_K \cN.$$
 \end{lemma}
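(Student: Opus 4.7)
The plan is to prove the direct sum decomposition by establishing the two required properties separately: that $I \cap \Span_K \cN = 0$, and that $I + \Span_K \cN = KQ$. Taken together, these immediately yield the $K$-vector space isomorphism $KQ \simeq I \oplus \Span_K \cN$.

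For the intersection being zero, I would argue by contradiction. Suppose $x \in I \cap \Span_K \cN$ is nonzero and write $x = \sum_{p \in \cB} \alpha_p p$. Membership of $x$ in $\Span_K \cN$ forces every $p$ with $\alpha_p \neq 0$ to lie in $\cN$, so in particular $\tip(x) \in \cN = \cB \setminus \tip(I)$. On the other hand, $x \in I \setminus \{0\}$ gives $\tip(x) \in \tip(I)$ directly from the definition of $\tip(I)$, a contradiction. Hence $x = 0$.

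For the sum equalling $KQ$, set $W = I + \Span_K \cN$ and suppose for contradiction that $W \subsetneq KQ$. Since $\succ$ is a well-order, the set of tips $\{\tip(x) : x \in KQ \setminus W\}$ has a minimum element; pick $x \in KQ \setminus W$ realizing this minimum, with $p = \tip(x)$ and leading coefficient $\alpha$. If $p \in \cN$, then $\alpha p \in \Span_K \cN \subseteq W$, so $x - \alpha p \notin W$; but this element is either zero (forcing $x \in W$, a contradiction) or has tip strictly smaller than $p$, contradicting minimality. If instead $p \in \tip(I)$, pick $y \in I$ with $\tip(y) = p$ and rescale so that its leading coefficient equals $\alpha$; then $x - y \notin W$ (as $y \in I \subseteq W$) yet has tip strictly smaller than $p$, again a contradiction.

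The only delicate step is setting up the minimal-counterexample argument, which rests squarely on $\succ$ being a well-order; the rest amounts to routine bookkeeping on how tips behave under scaling and subtraction. Note that no explicit Gr\"obner basis of $I$ is required for this lemma, since the argument appeals only to the existence of some element of $I$ with a prescribed tip, which is guaranteed by the definition of $\tip(I)$ itself.
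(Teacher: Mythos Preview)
Your proof is correct. The paper itself does not give a proof of this lemma; it simply cites \cite{G1} and moves on, so there is no in-paper argument to compare against. What you have written is essentially the standard proof of the Fundamental Lemma: the intersection $I\cap\Span_K\cN=0$ follows immediately from the definition of $\cN$, and $I+\Span_K\cN=KQ$ is obtained by a minimal-counterexample (or equivalently, Noetherian induction) argument using that $\succ$ is a well-order. Both parts are handled cleanly, and your observation that no explicit \grb basis is needed---only the existence of an element of $I$ with a given tip---is exactly right.
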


It is an immediate consequence of the Fundamental Lemma that if $x \in KQ \setminus \{ 0\}$ then $x = i_x + n_x$ for a unique $i_x \in I$ and a unique $n_x \in \mbox{ Span}_K \cN$.
 
  Let $\pi\colon KQ \to KQ/I$ be the canonical surjection. Then  the map  $\sigma\colon  KQ/I \to KQ$ given by $\sigma \pi (x) = n_x$ for $x \in KQ$  is a $K$-vector space splitting of $\pi$.
  We see  that $\sigma$ is
well-defined since if $x,y\in KQ$ are such that $\pi(x)=\pi(y)$, then $x-y\in I$.  Hence
$n_{x- y} =n_x-n_y=0$ and we conclude  that $n_x=n_y$.  Thus,  restricting to  $\mbox{ Span}_K (\cN)$,  we have inverse $K$-isomorphisms $\pi : \mbox{ Span}_K (\cN) \to KQ/I$ and $\sigma : KQ/I \to \mbox{ Span}_K (\cN)$. 
Therefore,  as vector spaces, we can identify $KQ/I$ with $\mbox{ Span}_K \cN$.  We note that  for $x, y \in \mbox{ Span}_K \cN$, the multiplication of $x$ and $y$ in $KQ/I$ equals $n_{x \cdot y}$ where  $x \cdot y $ is the usual multiplication in $KQ$. 

 Summarising, we have the following useful characterisation of a basis of $KQ/I$.
 
 \begin{proposition}\label{Basis N} As $K$-vector spaces, $\Span_K(\cN)$ is isomorphic to $KQ/I$ and hence $\cN$ can be identified with  a $K$-basis of $KQ/I$.
 \end{proposition}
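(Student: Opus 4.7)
The plan is essentially immediate from the Fundamental Lemma. That lemma provides the decomposition $KQ \simeq I \oplus \Span_K \cN$ as $K$-vector spaces. Quotienting the left side by $I$ gives $KQ/I$; quotienting the right side by $I$ kills the $I$ summand and leaves $\Span_K \cN$ intact, so $KQ/I \cong \Span_K \cN$.

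To make this rigorous, I would show directly that the restriction of the canonical surjection $\pi\colon KQ \to KQ/I$ to $\Span_K \cN$ is a $K$-vector space isomorphism, with two-sided inverse the map $\sigma$ constructed in the paragraph preceding the statement. Surjectivity follows from the decomposition: any $x \in KQ$ can be written uniquely as $x = i_x + n_x$ with $i_x \in I$ and $n_x \in \Span_K \cN$, so $\pi(x) = \pi(n_x)$, showing every element of $KQ/I$ is in the image of $\pi|_{\Span_K \cN}$. Injectivity is equally short: if $n \in \Span_K \cN$ satisfies $\pi(n) = 0$, then $n \in I$, and since $I \cap \Span_K \cN = 0$ by the Fundamental Lemma, we conclude $n = 0$.

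For the second assertion, the set $\cN$ is a subset of the path basis $\cB$ of $KQ$, so $\cN$ is $K$-linearly independent and hence a $K$-basis of $\Span_K \cN$ by definition. Transporting via the isomorphism $\pi|_{\Span_K \cN}$, the image $\pi(\cN)$ is a $K$-basis of $KQ/I$, which is exactly the sense in which $\cN$ can be identified with a basis of $KQ/I$.

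There is no real obstacle here: the entire content is packaged in the Fundamental Lemma, and the proposition is a formal bookkeeping statement — stated separately because the identification of $\cN$ with a basis of $KQ/I$ will be used repeatedly in the sequel.
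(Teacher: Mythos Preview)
Your proposal is correct and matches the paper's approach exactly: the paper does not give a separate proof of this proposition but presents it as a summary of the paragraph immediately preceding it, which constructs $\sigma$ as an inverse to $\pi|_{\Span_K(\cN)}$ using the Fundamental Lemma, precisely as you do. Your additional remarks on injectivity via $I\cap\Span_K\cN=0$ and on $\cN\subset\cB$ being linearly independent simply make explicit what the paper leaves implicit.
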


\begin{defn}
We call $I_{Mon} = \langle \tip(I) \rangle$  the ideal   in $KQ$ generated by $\tip(I)$ and define the \emph{associated monomial algebra of $\Lambda=KQ/I$} to be $\Lambda_{Mon} =KQ / I_{Mon}$. 
\end{defn} 

Recall that  $I_{Mon} $ is a monomial ideal, and that by Proposition \ref{prop-mono}(1), there is a unique
minimal set $\cT$ of paths that generate $I_{Mon}$.  
By the Fundamental Lemma there exist unique elments $g_t\in I$ and $n_t\in\Span_K(\cN)$, such that
$t=g_t+n_t$, for $t\in\cT$.  Since $t$ is uniform, $g_t$  and $n_t$ are uniform. Furthermore, since $n_t\in\Span_K(\cN)$, we have that 
$\tip(g_t)=t$ . We now set $\cG=\{g_t\mid t\in\cT \} \subset I$.   Then $\tip(\cG)=\cT$ and hence
$\cG$ is a \grb basis for $I$ (with respect to any admissible order) since $\langle \tip(\cG)\rangle=\langle \cT\rangle =\langle\Imo\rangle=\langle
\tip(I)\rangle$.

\begin{defn}  Let $I$ be an ideal in $KQ$. The set $\cG=\{g_t\mid t\in\cT \} \subset I$ defined above is called the \emph{reduced
\grb basis for $I$ (with respect to $\succ$)}.
\end{defn}

 The next result lists some facts about reduced \grb bases and the associated
monomial algebras, which will be useful for the proofs later in the paper.

\begin{proposition}\label{prop-redgb} Let $I$ be an ideal in  $KQ$
and let $\Lambda=KQ/I$.  Let $\cT$ be the unique minimal set of monomials generating
$\langle \tip(I)\rangle$ and let $\cG$ be the reduced \grb basis for $I$. Then the following hold.
\begin{enumerate}
\item  The reduced \grb basis for $\Imo$ is $\cT$.
\item  $\dim_K(\Lambda)=\dim_K(\Lambda_{Mon}) = \vert \cN \vert $  where $\vert \cN \vert$ denotes the cardinality of the set $\cN$. 
\end{enumerate}
\end{proposition}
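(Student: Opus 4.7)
The plan is to derive both parts directly from the Fundamental Lemma together with Proposition \ref{prop-mono} and Proposition \ref{Basis N}; essentially no new computation is required, only careful bookkeeping of what $\cN$ and $\tip$ mean when applied to $I$ versus $I_{Mon}$.

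For (1), I would begin with the observation that $I_{Mon}=\langle\tip(I)\rangle$ is by construction a monomial ideal. Proposition \ref{prop-mono}(1) then provides a unique minimal monomial generating set for $I_{Mon}$, which by the definition of $\cT$ in the excerpt is $\cT$ itself, and moreover $\cT$ is a \grb basis for $I_{Mon}$ with respect to any admissible order. To identify $\cT$ with the reduced \grb basis of $I_{Mon}$, I would unwind the definition: for each $t\in\cT$ I need to write $t=g_t+n_t$ with $g_t\in I_{Mon}$ and $n_t\in\Span_K(\cN_{Mon})$, where $\cN_{Mon}=\cB\setminus\tip(I_{Mon})$. Since $t$ itself lies in $I_{Mon}$, the decomposition $t=t+0$ already satisfies the requirements, so by the uniqueness clause in the Fundamental Lemma we must have $g_t=t$ and $n_t=0$. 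Hence the reduced \grb basis for $I_{Mon}$ is $\{t\mid t\in\cT\}=\cT$.

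For (2), I would apply Proposition \ref{Basis N} separately to $\Lambda=KQ/I$ and to $\Lambda_{Mon}=KQ/I_{Mon}$, which gives $\dim_K\Lambda=|\cN|$ and $\dim_K\Lambda_{Mon}=|\cN_{Mon}|$. It then suffices to check $\cN=\cN_{Mon}$. From the discussion preceding the Fundamental Lemma we already have $\cN=\cB\setminus\langle\tip(I)\rangle$. On the other side, since $I_{Mon}$ is monomial, Proposition \ref{prop-mono}(2) shows that the paths in $I_{Mon}$ are exactly those divisible by some element of $\cT$, so $\tip(I_{Mon})=I_{Mon}\cap\cB=\langle\tip(I)\rangle\cap\cB$, and therefore
$$\cN_{Mon}=\cB\setminus\tip(I_{Mon})=\cB\setminus\langle\tip(I)\rangle=\cN.$$
The common cardinality then yields the claimed equality of dimensions.

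The only real obstacle I anticipate is notational: one must be careful to distinguish the set $\tip(I)$ from the ideal $\langle\tip(I)\rangle$, and the $\cN$ associated to $I$ from the $\cN_{Mon}$ associated to $I_{Mon}$. Once these identifications are written out cleanly, both parts are immediate consequences of the Fundamental Lemma and the monomial-ideal structure already established.
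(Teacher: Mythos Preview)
The paper states Proposition~\ref{prop-redgb} without proof, treating both parts as well-known consequences of the Fundamental Lemma and the material on monomial ideals just summarized. Your proposal therefore supplies exactly the bookkeeping the paper leaves implicit, and it is correct.

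One small comment on the logical order in part~(1): to apply the definition of the reduced \grb basis to $I_{Mon}$, you need the minimal monomial generating set of $\langle\tip(I_{Mon})\rangle$, not of $I_{Mon}$ itself. You do have this, but it is worth making explicit: since $\cT$ is a \grb basis for $I_{Mon}$ (Proposition~\ref{prop-mono}(1)) and $\tip(\cT)=\cT$, one gets $\langle\tip(I_{Mon})\rangle=\langle\tip(\cT)\rangle=\langle\cT\rangle=I_{Mon}$, so the two ideals coincide and their common minimal generating set is indeed $\cT$. After that, your uniqueness argument via $t=t+0$ is exactly right. Part~(2) is clean as written; the identity $\cN=\cN_{Mon}$ is the whole point, and you establish it correctly from $\cB\setminus\tip(I)=\cB\setminus\langle\tip(I)\rangle$ together with $\tip(I_{Mon})=I_{Mon}\cap\cB$.
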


Keeping the notation above, we write elements of both $\Lambda$ and $\Lambda_{Mon}$
as $K$-linear combinations of elements in $\cN$.   The difference is how these elements
multiply in $\Lambda$ and $\Lambda_{Mon}$.  Next  assume that $I$ is an admissible ideal.  Notice that $I$  admissible implies that $I_{Mon}$ is admissible. 
 The converse is false in general. If $I$ is admissible,  then the set of vertices and arrows
are always in $\cN$ and both $\cT$ and $\cN$ are finite sets.

%%%%%%%%%%%%%%%%%%%%%%%%%%%%%%%
\section{Quasi-hereditary monomial algebras}\label{sec-monomial}
%%%%%%%%%%%%%%%%%%%%%%%%%%%%%%%%

In this section we give necessary and sufficient conditions for a monomial algebra to be quasi-hereditary. We also describe  the
structure of an algebra of the form $\Lambda/\Lambda e \Lambda$. 

We fix the following notation for the remainder of
this paper: $K$ will denote a field, $Q$ a quiver, $\cB$ the
set of paths in $KQ$, $\succ$ an admissible order on $\cB$,
$v_1,\dots,v_r$ a set of distinct vertices in $Q$, and
$e=\sum_{i=1}^rv_i$.  We fix an admissible ideal $I$ in $KQ $, let
$\Lambda=KQ/I$, and $J(\Lambda)$ be the Jacobson radical of $\Lambda$.

 \begin{definition} {\rm We say that a vertex {\it $v$ is properly internal} to a path $p \in \cB$, if there exist $p_1, p_2 \in \cB$ both of length greater than or equal to 1 and $p = p_1 v p_2$.  \\
If $\cT$ is a set of paths,
then a 	\emph{ vertex $v$ is not properly internal to $\cT$}
if, for each $t\in\cT$, $v$ can only occur in $t$ as either
the origin or end vertex of $t$.}
\end{definition}

The next result shows the importance of this definition.

\begin{proposition}\label{internal}
Let $\Lambda=KQ/I$  be a finite dimensional monomial algebra and let $\cT$ be a minimal set of generators of paths of $I$.   For  $v \in Q_0$,  the ideal  $\Lambda v \Lambda$  is heredity if and only if $v$ is 
not properly internal to  $\cT$.
\end{proposition}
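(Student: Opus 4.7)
The plan is to verify the three defining conditions of a heredity ideal for $L=\Lambda v\Lambda$ and show that each of them translates into a constraint on the minimal set $\cT$; the hypothesis that $v$ is not properly internal to $\cT$ will turn out to capture exactly the constraints coming from conditions (2) and (3).

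Condition (1), $L^{2}=L$, is automatic because $v$ is idempotent: $(\Lambda v\Lambda)^{2}\supseteq \Lambda v\cdot v\Lambda=\Lambda v\Lambda$. Condition (2), $LJ(\Lambda)L=0$, reduces to $vJ(\Lambda)v=0$, which, in a monomial algebra, says that no nontrivial cycle at $v$ lies in $\cN$. To derive this from the hypothesis I would use that $I$ is admissible and so $\cN$ is finite. If $c$ were a nontrivial cycle at $v$ in $\cN$, then some power $c^{N}$ must leave $\cN$ and hence contain some $t\in\cT$ as a subpath. A short case analysis on the start and end positions of $t$ inside $c^{N}=c\cdot c\cdots c$, using that $v$ is never properly internal to $t$, forces $t$ to fit entirely within a single copy of $c$, so $t$ is a subpath of $c$ itself, contradicting $c\in\cN$.

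For condition (3), projectivity of $\Lambda v\Lambda$ as a left $\Lambda$-module, I would invoke the criterion recalled in Section~2 from \cite{DR}: once $vJ(\Lambda)v=0$, projectivity is equivalent to bijectivity of the multiplication map
$$\mu\colon\Lambda v\otimes_{v\Lambda v}v\Lambda\longrightarrow\Lambda v\Lambda.$$
Since $v\Lambda v=Kv$, this is really a tensor product over $K$, and both sides have explicit bases coming from $\cN$: on the left, the elements $p\otimes q$ with $p,q\in\cN$, $p$ ending at $v$ and $q$ starting at $v$; on the right, the paths $r\in\cN$ passing through $v$. Any such $r$ has a unique factorisation $r=pq$ at $v$ (uniqueness again uses the absence of cycles at $v$ in $\cN$), so $\mu$ is always surjective. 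Injectivity fails precisely when some basis element $p\otimes q$ satisfies $pq\in I$; as $p,q\in\cN$ individually, such a failure forces the generator of $I$ responsible to straddle the junction $v$, that is, to have $v$ as a proper internal vertex. Hence injectivity of $\mu$ is equivalent to $v$ not being properly internal to $\cT$, which finishes the implication $(\Leftarrow)$.

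For the converse, assume $\Lambda v\Lambda$ is heredity. If $v$ were properly internal to some $t=t_{1}vt_{2}\in\cT$ with $t_{1},t_{2}$ nontrivial, minimality of $\cT$ gives $t_{1},t_{2}\in\cN$, so $t_{1}\otimes t_{2}$ is a nonzero basis element of $\Lambda v\otimes_{v\Lambda v}v\Lambda$ mapping to $t=0$ under $\mu$, contradicting the bijectivity required by condition (3). The main obstacle I expect is the combinatorial cycle argument for (2): one must carefully show that an element of $\cT$ sitting inside some $c^{N}$ and avoiding $v$ as an internal vertex is necessarily contained within a single copy of $c$. The remaining steps are routine bookkeeping with the basis $\cN$ furnished by the Fundamental Lemma.
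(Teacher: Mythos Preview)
Your proposal is correct and follows essentially the same approach as the paper's own proof: both directions hinge on the multiplication map $\mu\colon \Lambda v\otimes_{v\Lambda v} v\Lambda\to\Lambda v\Lambda$ from \cite{DR}, with the cycle argument (powers of a surviving loop at $v$ must eventually contain some $t\in\cT$, forcing $v$ to be properly internal) used to establish $vJ(\Lambda)v=0$, and minimality of $\cT$ used to produce a nonzero kernel element $t_1\otimes t_2$ in the converse. Your explicit mention of unique factorisation of paths through $v$ (a consequence of $vJ(\Lambda)v=0$) actually fills a small gap that the paper leaves implicit when concluding injectivity of $\mu$.
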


\begin{proof} First  assume that $v$ is not properly internal to any path in $\cT$. We begin by showing that $v J(\Lambda) v = 0$. Suppose that $v J(\Lambda) v \neq 0$. Then there exists a path $p$ in $J(\Lambda)$ such that $vpv \notin I$. Since $\Lambda$ is finite dimensional, $(vpv)^n \in I$ for sufficiently large $n$. Since $\cT$ is a \grb basis for $I$, there is some $t \in \cT$ such that $t$ is a subpath of $(vpv)^n$. But $ t $ is not a subpath of $p$ and hence $v$ must be internal to $t$ which is a contradiction. Now consider the multiplication map $\mu: \Lambda v \otimes_{v\Lambda v} v\Lambda \to \Lambda v \Lambda$. It suffices to show that this map is bijective \cite{DR}. The map is clearly onto. Since $v J(\Lambda) v = 0$, we have that $n \otimes n'$, for $n\in \cN v$ and $n' \in v\cN$ form a  basis of $\Lambda v \otimes_{v\Lambda v} v\Lambda$. Suppose that $\sum_{n_i \in \cN v, n'_i \in v \cN} n_i v n'_i  \in I$. Then there exists $t \in \cT$ such that $t$ is a subpath of $n_i vn'_i$ for some $i$. But $t$ is not a subpath of $n_i$ or $n'_i$ therefore $v$ is internal to $t$ which is a contradiction.  Hence $\mu$ is bijective and  $\Lambda v
\Lambda$ is a heredity  ideal in $\Lambda$.

Now assume $v$ is internal to some $t \in \cT$. If $v J(\Lambda) v \neq 0$ then $\Lambda v\Lambda$ is not a heredity ideal. Now assume that $v J(\Lambda) v = 0$. Since $v$ is internal to $t$, $t = t_1 v t_2$ with the length of $t_1$ and $t_2$ being at least 1 and  $t_1$ and $t_2$ are not in $I$.  But $t_1 v \otimes v t_2$ is nonzero (since $t_1v\in v\cN$ and
$vt_2\in v\cN$) and maps to $\mu(t_1v \otimes v t_2)=0$ since
$t \in I$. Therefore $\mu$ is not injective.   Hence $\Lambda v\Lambda$ is
not a heredity  ideal in $\Lambda$.
\end{proof}

Note that the first part  of the proof of Proposition~\ref{internal} gives a criterion for  $\Lambda v\Lambda$ to be a heredity ideal 
for not only monomial algebras but for any $\Lambda = KQ/I$.

\begin{corollary} \label{remark32}
Let $\Lambda = KQ/I$, $\cG$ be the reduced \grb basis for $I$ and let $\cT = \tip (\cG)$. Suppose $v$ is vertex in $Q$. 
Then $\Lambda v \Lambda$ is a heredity ideal  in $\Lambda$  if $v$ is not properly internal to any path in $\cT$.
\end{corollary}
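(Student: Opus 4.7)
The plan is to mimic the second half of the proof of Proposition~\ref{internal} almost verbatim, checking that each step remains valid when $I$ is not assumed monomial. The only general facts invoked are Proposition~\ref{Basis N}, which identifies $\Lambda$ with $\Span_K \cN$, and the divisibility property that any nonzero element of $I$ has its tip divisible by some $t \in \cT$; both hold in the reduced \grb setting. I will prove $v J(\Lambda) v = 0$ and then show the multiplication map $\mu \colon \Lambda v \otimes_{v\Lambda v} v\Lambda \to \Lambda v \Lambda$ is bijective; by \cite{DR} Statement (7) this gives projectivity on the left, and the remaining axiom $(\Lambda v \Lambda)^2 = \Lambda v \Lambda$ is automatic from $v = v \cdot v$.

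For $v J(\Lambda) v = 0$, suppose otherwise. Then there exists a path $p \in v \cN v$ of positive length. Since $\Lambda$ is finite dimensional, $p^m \in I$ for some $m$; as $p^m$ is a single path, $p^m = \tip(p^m) \in \langle \tip(I) \rangle = \langle \cT \rangle$, so some $t \in \cT$ divides $p^m$. Since $p \in \cN$, no subpath of $p$ alone lies in $\cT$, so $t$ must straddle a join between two consecutive copies of $p$, with arrows on both sides of the joining vertex $v$; this forces $v$ to be properly internal to $t$, contradicting the hypothesis.

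Given $v \Lambda v = Kv$, the map $\mu$ becomes $\Lambda v \otimes_K v \Lambda \to \Lambda v \Lambda$, with basis $\{n \otimes n' \mid n \in \cN v,\ n' \in v \cN\}$ and $\mu(n \otimes n') = nn' \in \Lambda$. Any $t \in \cT$ that is a subpath of $nn'$ must span the joining vertex $v$ (since $n, n' \in \cN$), placing $v$ properly internal to $t$ and contradicting the hypothesis; thus $nn' \in \cN$. Furthermore, the equality $v \Lambda v = Kv$ forces $v$ to appear in any element of $\cN v$ (resp.\ $v \cN$) only as its endpoint, for otherwise one would extract a nontrivial element of $v\cN v$. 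Hence the factorization of a path in $\cN$ at a $v$-vertex is unique, and $\mu$ injects the tensor basis into the $\Lambda$-basis $\cN$, yielding injectivity; surjectivity is clear. The main obstacle, relative to the monomial case, is precisely these two combinatorial verifications---stability of $\cN$ under such concatenations and uniqueness of $v$-splittings---both of which could a priori fail for a non-monomial ideal, but hold here thanks to the reduction $v \Lambda v = Kv$ obtained in the previous step.
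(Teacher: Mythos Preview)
Your proof is correct and follows essentially the same approach as the paper: the paper simply remarks that the second half of the proof of Proposition~\ref{internal} carries over verbatim to arbitrary $\Lambda=KQ/I$, and you have written out that verification in detail. Your treatment is in fact more careful than the paper's, since you make explicit both that the concatenation $nn'$ lands in $\cN$ (so no ``reduction'' occurs when multiplying in $\Lambda$) and that the factorisation of a path in $\cN$ at the vertex $v$ is unique (so distinct tensors $n\otimes n'$ map to distinct basis elements); the paper's proof of Proposition~\ref{internal} leaves the latter point implicit.
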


 For $e = v_1 + \cdots + v_r$, we define $Q\whe$ to be the subquiver of $Q$ obtained  by
removing the vertices $v_1,\ldots, v_r$ and all arrows entering
or leaving any of the $v_i$'s.  Let $\cB\whe$  be the set
of paths in $KQ\whe$.  Note that the admissible order $\succ$
restricts to an admissible order on $\cB\whe$. We leave the proof of the
next result to the reader

\begin{proposition}\label{prop-decomp}
As $K$-vector spaces,
\[KQ =KQ\whe\oplus \cE.\]
\qed
\end{proposition}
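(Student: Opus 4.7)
The plan is to give the decomposition on the path basis of $KQ$ and then check that the two summands correspond to two complementary subsets of $\cB$.

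First I would partition $\cB$ into $\cB\whe$ and its complement $\cB' = \cB \setminus \cB\whe$, where $\cB\whe$ is the subset of paths in $\cB$ that use none of the vertices $v_1,\dots,v_r$. By construction $\cB\whe$ is precisely a $K$-basis of $KQ\whe$ (the subquiver removes exactly the $v_i$'s and the arrows touching them, so any path avoiding the $v_i$'s lives in $Q\whe$ and conversely), while $\cB'$ is the set of paths containing at least one $v_i$ as a vertex.

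The heart of the argument is then to prove that $\cB'$ is a $K$-basis of $\cE = KQeKQ$. Linear independence is free since $\cB' \subseteq \cB$. For $\Span_K\cB' \subseteq KQeKQ$: given $p \in \cB'$, choose any $v_i$ appearing in $p$ and factor $p = p_1 p_2$ with $\mathfrak e(p_1) = v_i = \mathfrak o(p_2)$; then
\[
p_1 e p_2 = \sum_{j=1}^{r} p_1 v_j p_2 = p_1 v_i p_2 = p,
\]
since $p_1 v_j = 0$ for $j \neq i$. Thus $p \in KQeKQ$. Conversely, any element of $KQeKQ$ is a $K$-linear combination of terms $x e y$ with $x,y\in KQ$; expanding $x$ and $y$ in the basis $\cB$ and distributing $e = \sum v_j$ yields a $K$-linear combination of expressions $q_1 v_j q_2$ with $q_1,q_2\in\cB$, and each nonzero such term is a path passing through $v_j$, hence an element of $\cB'$.

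Combining, $\cB = \cB\whe \sqcup \cB'$ is a disjoint union of a $K$-basis of $KQ\whe$ and a $K$-basis of $KQeKQ$; because $\cB$ is itself $K$-linearly independent in $KQ$, the two subspaces intersect trivially and together span everything, giving $KQ = KQ\whe \oplus \cE$. There is no real obstacle here — the only thing to watch is that the factorization $p = p_1 v_i p_2$ really picks out a single surviving term from $p_1 e p_2$, which is guaranteed by orthogonality of the vertex idempotents.
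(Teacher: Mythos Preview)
Your argument is correct: partitioning the path basis $\cB$ into paths that avoid all of $v_1,\dots,v_r$ and paths that meet at least one of them, and then identifying these two subsets with bases of $KQ\whe$ and $\cE$ respectively, is exactly the intended elementary verification (the paper in fact leaves this proposition to the reader). The only point worth making explicit is the one you already flag at the end, namely that for $j\ne i$ one has $p_1v_j=0$ because $\mathfrak e(p_1)=v_i$, so the factorisation $p=p_1ep_2$ really exhibits $p$ as a single basis element of $\cE$; with that, nothing is missing.
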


Based on the fundamental Lemma, we fix the following notation: if  $x\in KQ$, we write $x=x\whe +x_e$  for the unique elements
$x\whe\in KQ\whe$ and $x_e\in\cE$, and if  $X\subseteq KQ$,
then define $X\whe=\{x\whe\mid x\in X\}$.

We list some of the basic results that relate these definitions and
leave the proofs to the reader.

\begin{proposition}\label{prop-relate}
{\phantom x}

\begin{enumerate}
\item If $x\in KQ$, then $(x\whe)\whe=x\whe$ and $(x_e)_e=x_e$.
\item If $I$ is an ideal in $KQ$, then $I\whe$ is an ideal
in $KQ\whe$.
\item  $(KQ)\whe=K(Q\whe)$.
\item If $X\subseteq KQ$, then $X\whe=\{x\in KQ\whe\mid \exists x'\in\cE \text{ such that }
x+x'\in X\}$
\end{enumerate}\qed
\end{proposition}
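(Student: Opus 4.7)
The plan is to derive every item from the direct-sum decomposition $KQ = KQ\whe \oplus \cE$ of Proposition~\ref{prop-decomp}, using two structural facts: $\cE = KQeKQ$ is a two-sided ideal of $KQ$, and $KQ\whe$ (read as $K(Q\whe)$) is a subalgebra of $KQ$, since concatenating two paths that avoid $v_1,\dots,v_r$ again yields such a path. Each of the five items then reduces to a short check on the unique decomposition $x = x\whe + x_e$.

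Items (1), (2) and (4) are essentially formal. Part (1) is the decomposition of Proposition~\ref{prop-decomp} rewritten, with uniqueness coming from the internal direct sum. For (2), since $x\whe \in KQ\whe$ the trivial decomposition $x\whe = x\whe + 0$ is the unique one, so $(x\whe)\whe = x\whe$; the argument for $x_e$ is symmetric. Part (4) follows: $(KQ)\whe \subseteq KQ\whe$ by (1), and any $y \in KQ\whe$ is its own image $y = y\whe \in (KQ)\whe$ by (2).

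For (3) and (5), I would use the two-sided ideal property of $\cE$ together with the multiplicative closure of $KQ\whe$. For (3), given $x \in I$ and $p \in KQ\whe$, write $x = x\whe + x_e$; then $px = p\cdot x\whe + p\cdot x_e$, with $p\cdot x\whe \in KQ\whe$ and $p\cdot x_e \in \cE$, so uniqueness forces $(px)\whe = p\cdot x\whe \in I\whe$. Right multiplication is symmetric, and additivity of $I\whe$ follows from the linearity of $x \mapsto x\whe$, itself a consequence of uniqueness. Part (5) is pure bookkeeping: $x \in X\whe$ means $x = y\whe$ for some $y \in X$, and then $x' := y_e \in \cE$ satisfies $x + x' = y \in X$; conversely, if $y = x + x' \in X$ with $x \in KQ\whe$ and $x' \in \cE$, then uniqueness gives $y\whe = x$, so $x \in X\whe$. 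The only step requiring any real thought is the compatibility check in (3) — that multiplication by elements of $KQ\whe$ respects each summand of the decomposition — which is precisely where the subalgebra/ideal interplay is used; everything else is bookkeeping around the $KQ\whe \oplus \cE$ splitting.
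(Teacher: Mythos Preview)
Your proof is correct and is exactly the routine verification the paper has in mind: the authors leave this proposition to the reader (it is closed with a bare \qed), and the intended argument is precisely to read each item off the direct-sum decomposition $KQ = KQ\whe \oplus \cE$ of Proposition~\ref{prop-decomp}, using that $\cE$ is a two-sided ideal and $KQ\whe$ a subalgebra. There is nothing to add.
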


Let $\iota\colon KQ\whe\to KQ$ be the inclusion map and
$\pi\colon KQ\to \Lambda$ and $\rho\colon \Lambda \to
\Lambda/\Lambda e \Lambda$ be the canonical surjections.
Because of its usefulness, we include a proof of the following
well known result.

\begin{lemma}\label{lem-ker}  Let $\Lambda = KQ/I$ be finite dimensional and for $v_1, \ldots, v_r \in Q_0$, set $e = v_1 + \cdots v_r $.  Let 
\[\varphi\colon
KQ\whe\extto{\iota}KQ\extto{\pi}\Lambda \extto{\rho}\Lambda/\Lambda e \Lambda.\]
Then $\varphi$ is surjective and $\ker(\varphi)=
I\whe$.
\end{lemma}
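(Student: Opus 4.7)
The plan is to prove surjectivity and the kernel computation separately, using the decomposition $KQ = KQ\whe \oplus \cE$ from Proposition~\ref{prop-decomp} as the main tool, together with the characterisation of $X\whe$ given in Proposition~\ref{prop-relate}(5).

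For surjectivity, I would start with an arbitrary element of $\Lambda/\Lambda e\Lambda$, lift it through $\rho$ to an element $\lambda\in\Lambda$, and lift $\lambda$ further to some $x\in KQ$. Writing $x = x\whe + x_e$ with $x_e\in\cE=KQeKQ$, the key observation is that $\pi(x_e)\in\Lambda e\Lambda$, so $\rho\pi(x_e)=0$. Hence $\sigma(x\whe) = \rho\pi\iota(x\whe) = \rho\pi(x) = \rho(\lambda)$, which gives surjectivity.

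For the kernel, I plan to prove both inclusions. For $I\whe\subseteq\ker(\sigma)$: given $y\in I\whe$, Proposition~\ref{prop-relate}(5) gives some $y'\in\cE$ with $y+y'\in I$. Then $\pi(y)=-\pi(y')\in\pi(\cE)\subseteq\Lambda e\Lambda$, so $\sigma(y)=\rho\pi(y)=0$. For the reverse inclusion $\ker(\sigma)\subseteq I\whe$: suppose $y\in KQ\whe$ satisfies $\sigma(y)=0$. Then $\pi(y)\in\ker(\rho)=\Lambda e\Lambda=\pi(\cE)$, so there exists $y'\in\cE$ with $\pi(y)=\pi(y')$, i.e., $y-y'\in I$. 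Setting $y''=-y'\in\cE$ gives $y+y''\in I$, so by Proposition~\ref{prop-relate}(5) we conclude $y\in I\whe$.

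The proof is essentially a diagram chase, so there is no serious obstacle; the only subtle point is being careful that the characterisation of $I\whe$ used in Proposition~\ref{prop-relate}(5) is the right one, namely that $y\in KQ\whe$ lies in $I\whe$ precisely when it can be corrected by an element of $\cE$ to land in $I$. Once this characterisation is in hand, both inclusions follow immediately from the fact that $\ker(\rho\pi) = I + \cE$ as a subset of $KQ$.
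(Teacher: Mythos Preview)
Your proposal is correct and follows essentially the same approach as the paper: both arguments use the decomposition $KQ = KQ\whe \oplus \cE$ for surjectivity and the characterisation in Proposition~\ref{prop-relate}(5) for the kernel. If anything, your treatment of the inclusion $I\whe \subseteq \ker(\sigma)$ is more explicit than the paper's, which starts from $x\in I$ rather than from $y\in I\whe$ and leaves the conclusion $\sigma(y)=0$ somewhat implicit.
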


 \begin{proof}We begin by showing $\varphi$ is surjective.  Let $\bar \lambda\in \Lambda/\Lambda e
\Lambda$ and $\lambda\in \Lambda$ such that $\rho(\lambda)=\bar \lambda$.  Let $r\in KQ$ map to $\lambda$ under the  canonical
surjection $\pi$.
Then $r=r\whe +r_e$  and  $\varphi(r\whe)=\rho\pi(r\whe)=\rho(\pi(r\whe + r_e))$ since
$\rho(\pi(r_e))=0$.  We have
$\varphi(r\whe)=\rho(\pi(r))=\bar \lambda$ and
we conclude that $\varphi$ is surjective.

Now we prove that $\ker \varphi = I\whe$.  We start by showing that $I\whe \subset \ker \varphi$. For that  let  $x \in I$. 
 We have that.  
$ x= x\whe + x_e\in I$   and hence 
 $x\whe \in KQ_{\widehat{e}}$ such that there exists $x' \in\cE$ with $x\whe +x' \in I$.  By Proposition \ref{prop-relate},
$x\whe\in I\whe$.  
On the other hand,
let $x \in \ker \varphi$. Then $\pi (\iota(x)) \in \Lambda e \Lambda$ and therefore 
$\pi (\iota(x)) = \sum_i \pi (y_i) e \pi (z_i)$ for some $y_i, z_i \in KQ$. Set $-x ' = \sum_i
y_i e z_i$. Then $x + x' \in I$. Hence $x\in I\whe$. 
\end{proof}

As an immediate  consequence we have the following corollary.

\begin{corollary}\label{alg-iso}There is natural isomorphism of
algebras
$$ \Lambda / \Lambda e \Lambda \simeq KQ_{\widehat{e}} / I_{\widehat{e}}. $$\qed
\end{corollary}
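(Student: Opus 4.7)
The plan is to read off the corollary as a direct application of the First Isomorphism Theorem to the map $\sigma$ constructed in Lemma~\ref{lem-ker}. That lemma already provides the two essential ingredients: surjectivity of $\sigma$ and the identification $\ker(\sigma)=I\whe$. So the only work left is to upgrade the statement from one about $K$-vector spaces to one about algebras, that is, to check that $\sigma$ is compatible with multiplication and units, and then to pass to the quotient.

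First I would verify that $\sigma=\rho\circ\pi\circ\iota$ is an algebra homomorphism. The maps $\pi\colon KQ\to\Lambda$ and $\rho\colon\Lambda\to\Lambda/\Lambda e\Lambda$ are canonical surjections, hence algebra homomorphisms. For the inclusion $\iota\colon KQ\whe\to KQ$, the product in $KQ$ of two paths of $\cB\whe$ is either zero or a concatenation that still avoids every vertex $v_1,\dots,v_r$, since $Q\whe$ was formed by removing those vertices together with every arrow incident to them; thus $\iota$ preserves multiplication. The subtlety is that $\iota$ is not unital ($1_{KQ\whe}=1-e\ne 1$), but after applying $\rho\circ\pi$ the image of $e$ vanishes, so $\sigma(1-e)$ is the identity of $\Lambda/\Lambda e\Lambda$. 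Hence $\sigma$ is a unital $K$-algebra homomorphism.

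Next I would apply Lemma~\ref{lem-ker}: $\sigma$ is surjective and $\ker(\sigma)=I\whe$. The First Isomorphism Theorem for $K$-algebras then yields an induced algebra isomorphism
\[
\overline{\sigma}\colon KQ\whe/I\whe \xrightarrow{\ \sim\ }\Lambda/\Lambda e\Lambda.
\]
Naturality is automatic, since every arrow in the construction of $\sigma$ (the inclusion of subquiver path algebras and the two canonical quotients) is canonical.

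I do not anticipate any genuine obstacle: Lemma~\ref{lem-ker} carries all the content, and the only small point to articulate is the multiplicativity of $\iota$, which reduces to the combinatorial fact that concatenation cannot reintroduce a removed vertex. Everything else is an immediate invocation of the First Isomorphism Theorem.
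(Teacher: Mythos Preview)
Your proposal is correct and is exactly the argument the paper has in mind: the corollary is stated as an ``immediate consequence'' of Lemma~\ref{lem-ker} with no further proof, and your write-up simply unpacks that immediacy by observing that $\sigma$ is a (unital, after passing to the quotient) algebra homomorphism and then invoking the First Isomorphism Theorem. The only thing you add beyond the paper is the explicit remark about the unit, which is a nice touch but not something the authors bothered to flag.
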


\begin{remark}\label{monomial hat ideal}
{\rm If $I$ is  a monomial ideal  with minimal generating set of paths $\cT$, then $I_{\widehat{e}} =\langle \cT \rangle\whe= \langle \cT_{\widehat{e}} \rangle$. In particular, if $I$ is a monomial
ideal in $KQ$, then $I\whe$ is a monomial ideal in $KQ\whe$.}
\end{remark}

Recall the following well-known Lemma which follows directly from \cite{APT}, which shows that in our context it is enough to consider idempotents of the form $\sum v_i$, 
where the $v_i$ are some vertices in $Q_0$.

\begin{lemma}\label{lem-idempotent}
Let $e$ be an idempotent in $\Lambda = KQ/I$, $I$ admissible. Then $\Lambda e \Lambda = \Lambda \sum_i v_i \Lambda$  for  some $v_i \in Q_0$. 
\end{lemma}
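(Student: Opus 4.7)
The plan is to realise $\Lambda e\Lambda$ as the trace in $\Lambda$ of the projective right $\Lambda$-module $e\Lambda$, decompose $e\Lambda$ via Krull--Schmidt into vertex projectives, and then observe that the trace ideal is insensitive to the multiplicities of the indecomposable summands.

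First I would note that, since $e^2=e$, the right $\Lambda$-module $e\Lambda$ is a direct summand of $\Lambda$ and hence projective. Because $I$ is admissible, a complete list of indecomposable projective right $\Lambda$-modules (up to isomorphism) is $\{\,v\Lambda\mid v\in Q_0\,\}$, with distinct vertices giving non-isomorphic projectives. Applying Krull--Schmidt to $e\Lambda$, I would therefore write
\[ e\Lambda \;\cong\; \bigoplus_{j=1}^{k}(v_j\Lambda)^{n_j} \]
for some distinct vertices $v_1,\dots,v_k\in Q_0$ and positive integers $n_1,\dots,n_k$.

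Next I would invoke the description of idempotent two-sided ideals recalled just before Proposition 2.3 (cf.\ \cite{APT}): for any idempotent $f\in\Lambda$, the ideal $\Lambda f\Lambda$ equals the trace of $f\Lambda$ in $\Lambda$, because $\Hom_\Lambda(f\Lambda,\Lambda)\cong \Lambda f$ via $\varphi\mapsto\varphi(f)$, so the sum of the images of all such $\varphi$ equals $\Lambda f\Lambda$. Applying this to $e$ and to $v:=\sum_{j=1}^{k}v_j$ (which is an idempotent since the $v_j$ are pairwise orthogonal vertices), I get
\[ \Lambda e\Lambda = \mathrm{tr}(e\Lambda,\Lambda) \qquad\text{and}\qquad \Lambda v\Lambda = \mathrm{tr}\Bigl(\bigoplus_{j=1}^{k}v_j\Lambda,\,\Lambda\Bigr). \]

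Finally, the trace of a direct sum is the sum of the traces, and the trace of $(v_j\Lambda)^{n_j}$ in $\Lambda$ coincides with the trace of $v_j\Lambda$: every map $(v_j\Lambda)^{n_j}\to\Lambda$ is a sum of maps out of its $n_j$ components, each of which is a map $v_j\Lambda\to\Lambda$. Combining this insensitivity to multiplicities with the Krull--Schmidt isomorphism above yields $\mathrm{tr}(e\Lambda,\Lambda)=\mathrm{tr}(\bigoplus_{j=1}^{k}v_j\Lambda,\Lambda)$, and hence $\Lambda e\Lambda=\Lambda v\Lambda=\Lambda\bigl(\sum_{j=1}^{k}v_j\bigr)\Lambda$, as required. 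I do not foresee a real obstacle: the only non-trivial ingredient is the trace characterisation of idempotent ideals, which is already packaged by \cite{APT} and recalled in Section~2; the rest is a bookkeeping application of Krull--Schmidt, valid because $I$ is admissible.
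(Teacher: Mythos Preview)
Your argument is correct. The paper does not give its own proof of this lemma; it merely states that the result ``follows directly from \cite{APT}'', and the relevant content of \cite{APT} is precisely what is recalled in Section~2.1: an idempotent two-sided ideal is the trace of a projective, and for $\Lambda=KQ/I$ with $I$ admissible one may take that projective to be $\sum_{v\in S} v\Lambda$ for a set $S$ of distinct vertices. Your proof is a clean unpacking of exactly this reasoning---Krull--Schmidt plus the observation that the trace ideal depends only on the isomorphism types of the indecomposable summands, not their multiplicities---so your approach coincides with the one the paper implicitly invokes.
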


We  can now give the  following necessary and sufficient conditions for a monomial algebra to be quasi-hereditary. 

\begin{theorem}\label{thm vertex ordering}
 Let $\Lambda=KQ/I$  be a finite dimensional monomial algebra, where I  admissible and let $\cT$ be  the minimal set of generators of paths of $I$. Then $\Lambda $   is quasi-hereditary  if and only if we can order all the vertices $v_1, \ldots, v_n$ in $Q_0$ such that 
for each $i$, the vertex $v_i$ is not properly internal to  $\cT_{\wh{i-1}}$. \end{theorem}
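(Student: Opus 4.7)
The plan is to set up a correspondence between heredity chains whose successive idempotents are sums of vertices differing by one vertex and orderings of $Q_0$ with the stated property. The main tools are Proposition~\ref{internal}, Corollary~\ref{alg-iso} together with Remark~\ref{monomial hat ideal}, and Lemma~\ref{lem-idempotent}.

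For the ``if'' direction, assume an ordering $v_1,\ldots,v_n$ satisfying the hypothesis exists and set $e_i=v_1+\cdots+v_i$. I would show that
\[0\subset \Lambda e_1\Lambda\subset\cdots\subset\Lambda e_n\Lambda=\Lambda\]
is a heredity chain. By Corollary~\ref{alg-iso} and Remark~\ref{monomial hat ideal}, the quotient $\Lambda/\Lambda e_{i-1}\Lambda$ is canonically isomorphic to the monomial algebra $KQ_{\widehat{e_{i-1}}}/\langle\cT_{\widehat{i-1}}\rangle$ with minimal generating set $\cT_{\widehat{i-1}}$. Under this identification, the image of $\Lambda e_i\Lambda/\Lambda e_{i-1}\Lambda$ is the ideal generated by the single vertex $v_i$. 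The hypothesis is exactly that $v_i$ is not properly internal to $\cT_{\widehat{i-1}}$, so Proposition~\ref{internal} applied to this monomial quotient says this ideal is heredity.

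For the ``only if'' direction, start with any heredity chain $0\subset \Lambda e_1\Lambda\subset\cdots\subset\Lambda e_m\Lambda=\Lambda$. Applying Lemma~\ref{lem-idempotent} inductively to the successive quotients (which remain monomial by Corollary~\ref{alg-iso} and Remark~\ref{monomial hat ideal}), I may replace each $e_i$ by a sum of distinct vertices. The main obstacle is to refine so that each step adds exactly one vertex. For this I would first establish the following extension of Proposition~\ref{internal}: if $e'=v'_1+\cdots+v'_k$ is a sum of distinct vertices in a monomial algebra $\Lambda'$ with minimal generating set $\cT'$ and $\Lambda' e'\Lambda'$ is heredity, then no $v'_j$ is properly internal to $\cT'$. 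The proof of Proposition~\ref{internal} adapts verbatim: $e' J(\Lambda') e'=0$ forces $v'_j J(\Lambda') v'_j=0$ for each $j$ (since $v'_j=v'_je'$), and a properly internal $v'_j$ in some $t=t_1 v'_j t_2\in\cT'$ yields a nonzero kernel element $t_1 v'_j\otimes v'_j t_2$ for the multiplication map $\Lambda' e'\otimes_{e'\Lambda' e'}e'\Lambda'\to \Lambda' e'\Lambda'$, contradicting that $\Lambda'e'\Lambda'$ is heredity.

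With this in hand, write $e_i-e_{i-1}=v_{i_1}+\cdots+v_{i_k}$ and apply the extension to the monomial quotient $\Lambda/\Lambda e_{i-1}\Lambda$: each $v_{i_j}$ is not properly internal to $\cT_{\widehat{i-1}}$. Since $\cT_{\widehat{e_{i-1}+v_{i_1}+\cdots+v_{i_{j-1}}}}\subseteq\cT_{\widehat{i-1}}$, the non-internal condition automatically passes to the smaller generating set, so Proposition~\ref{internal} (in the iterated monomial quotient furnished by Corollary~\ref{alg-iso} and Remark~\ref{monomial hat ideal}) gives a heredity ideal at each single-vertex step. Hence the original chain refines to one that adds one vertex at a time; relabelling those vertices in the order they are added produces the required ordering $v_1,\ldots,v_n$ of $Q_0$ (noting that for admissible $I$ the chain can only terminate at $\Lambda$ once every vertex has been included, since no primitive idempotent lies in $J(\Lambda)$).
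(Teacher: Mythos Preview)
Your proposal is correct and follows essentially the same route as the paper. For the ``if'' direction both you and the paper identify $\Lambda/\Lambda e_{i-1}\Lambda$ with $KQ_{\widehat{e_{i-1}}}/\langle\cT_{\widehat{e_{i-1}}}\rangle$ via Corollary~\ref{alg-iso} and Remark~\ref{monomial hat ideal} and then invoke Proposition~\ref{internal}. For the ``only if'' direction the paper reaches the key fact ``each vertex summand of a heredity idempotent is not properly internal'' by a commutative square showing that the multiplication map for $\Lambda v_j\otimes_{v_j\Lambda v_j}v_j\Lambda$ injects into the (bijective) multiplication map for $\Lambda e\otimes_{e\Lambda e}e\Lambda$, and then applies Proposition~\ref{internal}; you instead produce the kernel element $t_1v'_j\otimes v'_jt_2$ directly inside $\Lambda' e'\otimes_{e'\Lambda' e'}e'\Lambda'$, using that $e'\Lambda' e'=\bigoplus_j Kv'_j$ makes this tensor product split as $\bigoplus_j \Lambda' v'_j\otimes_K v'_j\Lambda'$. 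These are the same argument in different clothing. Your explicit refinement step (passing from $\cT_{\widehat{e_{i-1}}}$ to the smaller $\cT_{\widehat{e_{i-1}+v_{i_1}+\cdots+v_{i_{j-1}}}}$) is spelled out more carefully than the paper's somewhat telegraphic induction, but the content is identical.
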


\begin{proof}
First assume that we can order all the vertices $v_1, \ldots, v_n$ in $Q_0$ such that 
for each $i$, the vertex $v_i$ is not properly internal to  $\cT_{\wh{i-1}}$. It suffices to show that 
$\Lambda e_i \Lambda / \Lambda e_{i-1} \Lambda$ is a heredity ideal in 
$\Lambda / \Lambda e_{i-1} \Lambda$ for all $i$. Since $v_1$ is not properly internal in $\cT$, by  Corollary \ref{remark32}  we have that $\Lambda v_1 \Lambda$ is heredity in $\Lambda$. 
It follows from Corollary~\ref{alg-iso} that $\Lambda / \Lambda v_1 \Lambda \simeq KQ_{\widehat{v_1}}  /
\langle \cT_{\widehat{v_1}} \rangle $ since $\langle \cT_{\widehat{v_1}} \rangle=\langle \cT\rangle_{\widehat{v_1}}$.
We proceed by induction on $i$. Assume that we have shown the result for $i$. That is $\Lambda e_{i} \Lambda / \Lambda e_{i-1} \Lambda$ is a heredity ideal in 
$\Lambda / \Lambda e_{i-1} \Lambda$.  

We wish to show that  $\Lambda e_{i+1} \Lambda / \Lambda e_{i} \Lambda$ is a heredity ideal in 
$\Lambda / \Lambda e_{i} \Lambda$.
By Corollary \ref{alg-iso},    we have that for all $i$, 
\[
\Lambda / \Lambda e_{i} \Lambda\simeq KQ_{\widehat{e_{i}}}/\langle \cT_{\widehat{e_{i}}}\rangle
\]
Consider the following exact commutative diagram:
\[
\xymatrix{
& 0\ar[d]&0\ar[d]\\
0\ar[r]&\Lambda e_i\Lambda\ar[d]\ar[r]&\Lambda\ar^=[d]\ar[r]&KQ_{\widehat{e_i}}/
\langle \cT_{\widehat{e_i}}\rangle\ar^f[d]\ar[r]&0\\
0\ar[r]&\Lambda e_{i+1}\Lambda\ar[d]\ar[r]&\Lambda\ar[d]^{}
\ar[r]&KQ_{\widehat{e_{i+1}}}/
\langle \cT_{\widehat{e_{i+1}}}\rangle\ar[r]\ar[d]&0\\
&\Lambda e_{i+1}\Lambda/\Lambda e_i\Lambda\ar[d]&0&0\\
&0}\]
  By the diagram above, showing that  $\Lambda e_{i+1}\Lambda/\Lambda e_{i}\Lambda$ is
a heredity ideal in $\Lambda/\Lambda e_{i}\Lambda$ is equivalent to showing that $\ker(f)$ is a heredity ideal in
$KQ_{\widehat{e_i}}/\langle \cT_{\widehat{e_i}}\rangle$.
The reader may check that \[KQ_{\widehat{e_{i+1}}}/\langle \cT_{\widehat{e_{i+1}}}\rangle \text{ is isomorphic to }
KQ_{\widehat{e_i}}/
\langle \cT_{\widehat{e_i}}\rangle/(KQ_{\widehat{e_i}}/
\langle \cT_{\widehat{e_i}}\rangle v_{i+1}KQ_{\widehat{e_i}}/
\langle \cT_{\widehat{e_i}}\rangle).\]

Hence $\ker(f)\simeq KQ_{\widehat{e_i}} /
\langle \cT_{\widehat{e_i}}\rangle v_{i+1}KQ_{\widehat{e_i}}/
\langle \cT_{\widehat{e_i}}\rangle$.
By hypothesis, $v_{i+1}$ is not properly internal to  $\cT_{\widehat{e_i}}$,
and so by  Corollary  \ref{remark32}, $\ker(f)$ is a heredity ideal
in $KQ_{\widehat{e_i}}/\langle \cT_{\widehat{e_i}}\rangle$. We
conclude that $\Lambda$ is quasi-hereditary.

Suppose now that $\Lambda$ is quasi-hereditary and let 
\[\label{chain}  0  \subset \Lambda e_1 \Lambda  \subset \cdots \subset \Lambda e_n \Lambda =   \Lambda \]
be a heredity chain for $\Lambda$. 
We proceed by induction on $n$. We begin by showing that if $e_1= v_1 + \cdots + v_m$ then each $v_i$ is not properly internal to $\cT$. Without loss of generality we do this for $v_1$. We have that $v_1 J(\Lambda) v_1 = 0 $ since $e J(\Lambda) e =0$. 
Consider the following diagram  
$$ \xymatrix{  \Lambda v_1 \otimes_{v_1 \Lambda v_1 } v_1 \Lambda  \ar[d]^{h} \ar[r]^{\phantom{xxxxx} f} 
  & \Lambda v_1 \Lambda  \ar[d] \\
\Lambda e_1 \otimes_{e_1 \Lambda e_1 } e_1 \Lambda  \ar[r]^{\phantom{xxxxx}g}  & \Lambda e_1 \Lambda
}$$ where $g$ is a bijection. The map $f$ is clearly onto and $h$ is injective. That implies that $f$ injective and hence bijective. Hence
$\Lambda v_1 \Lambda$ is heredity in $\Lambda = KQ / \langle \cT \rangle$ and by Proposition~\ref{internal}, $v_1 $ is not properly internal to $\cT$. If $n =1$ this proves  the result.
Assume that the result holds for  $i= n-1$. By  the induction hypothesis  $\Lambda  / \Lambda e_i \Lambda$ is quasi-hereditary  with heredity chain 
$$0 \subset \Lambda e_{i+1} \Lambda / \Lambda e_i \Lambda \subset \cdots  \subset \Lambda e_n \Lambda  / \Lambda e_i \Lambda = \Lambda  / \Lambda e_i \Lambda.$$ 
By Corollary~\ref{alg-iso} together with Remark~\ref{monomial hat ideal}, $\Lambda  / \Lambda e_i \Lambda = K Q_{\wh{i}} / \cT_{\wh{i}}$. Since $\Lambda / \Lambda e_i \Lambda $ has a heredity chain of length  $n-i-1$ and $\cT_{\wh{i}} \subset \cT$, we see by induction 
that the result holds.

\end{proof}

%%%%%%%%%%%%%%%%%%%%%%%%%%%%%%%
\section{Quasi-hereditary algebras}\label{sec-general}
%%%%%%%%%%%%%%%%%%%%%%%%%%%%%%%%

In this Section we show that if an algebra $\Lambda$ is such that   $\Lambda_{Mon}$ is quasi-hereditary then $\Lambda$ is quasi-hereditary.  More precisely, we show

\begin{theorem}\label{thm:implication}
Let $\Lambda = KQ/I$  with $I$ admissible.
If $\Lambda_{Mon}$ is quasi-hereditary then  $\Lambda$ is quasi-hereditary. \end{theorem}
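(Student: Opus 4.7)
The plan is to transport the heredity chain from $\Lambda_{Mon}$ to $\Lambda$ by means of a Gr\"obner-basis reduction that respects the projection $\pi_{\wh{i-1}}\colon KQ\to KQ_{\wh{i-1}}$. Since $\Lambda_{Mon}$ is quasi-hereditary, Theorem~\ref{thm vertex ordering} produces an ordering $v_1,\ldots,v_n$ of $Q_0$ such that $v_i$ is not properly internal to $\cT_{\wh{i-1}}$ for every $i$, where $\cT=\tip(\cG)$ is the minimal set of paths generating $I_{Mon}$. Setting $e_i=v_1+\cdots+v_i$, I claim that $0\subset \Lambda e_1\Lambda\subset\cdots\subset \Lambda e_n\Lambda=\Lambda$ is a heredity chain. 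Proceeding by induction on $i$, it suffices to show that $\Lambda e_i\Lambda/\Lambda e_{i-1}\Lambda$ is a heredity ideal in $\tilde\Lambda:=\Lambda/\Lambda e_{i-1}\Lambda$. By Corollary~\ref{alg-iso}, $\tilde\Lambda\cong KQ_{\wh{i-1}}/I_{\wh{i-1}}$, and the same exact commutative diagram used in the proof of Theorem~\ref{thm vertex ordering} identifies $\Lambda e_i\Lambda/\Lambda e_{i-1}\Lambda$ with the ideal $\tilde\Lambda\, v_i\, \tilde\Lambda$ inside $\tilde\Lambda$. Applying Corollary~\ref{remark32} to $\tilde\Lambda$, it is then enough to check that $v_i$ is not properly internal to any path in the tip set of the reduced Gr\"obner basis of $I_{\wh{i-1}}$ (with respect to $\succ$ restricted to $\cB_{\wh{i-1}}$).

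The crux is the following lemma, which will use the ordering hypothesis essentially: every $\tilde x\in I_{\wh{i-1}}$ admits a lift $x\in I$ with $\tip(x)\in\cB_{\wh{i-1}}$, so that $\tip(I_{\wh{i-1}})=\tip(I)\cap\cB_{\wh{i-1}}$ and its minimal monomial generating set is exactly $\cT_{\wh{i-1}}$. To prove the lemma, pick a lift $x\in I$ of $\tilde x$ whose tip is $\succ$-minimal, and suppose for contradiction that $\tip(x)\notin\cB_{\wh{i-1}}$. Choose $g\in\cG$ with $\tip(g)\mid\tip(x)$ and factor $\tip(x)=r\,\tip(g)\,s$ with leading coefficient $\alpha$. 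If $r$ or $s$ passes through some $v_\ell$ with $\ell<i$, then $rgs\in\ker\pi_{\wh{i-1}}$ and the reduction $x\mapsto x-\alpha rgs$ yields a lift of $\tilde x$ with strictly smaller tip, contradicting minimality. Otherwise $r,s\in\cB_{\wh{i-1}}$; since the endpoints of $\tip(g)$ are shared with those of $r$ and $s$, any vertex $v_j$ with $j<i$ appearing in $\tip(x)$ must be strictly interior to $\tip(g)\in\cT$. Taking the smallest such $j$ gives $\tip(g)\in\cT_{\wh{j-1}}$ with $v_j$ properly internal to it, contradicting the ordering assumption coming from $\Lambda_{Mon}$.

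The main obstacle is precisely this lemma. Without the ordering hypothesis the equality $\tip(I_{\wh{e}})=\tip(I)\cap\cB_{\wh{e}}$ can easily fail, since a lower-order term of a Gr\"obner basis element may survive the projection $\pi_{\wh{e}}$ and create a new tip that is not in $\tip(I)$; the ordering is what guarantees that every Gr\"obner reduction step can be performed within a single fiber of $\pi_{\wh{i-1}}$. Once the lemma is established, Corollary~\ref{remark32} closes the induction step and $\Lambda$ is quasi-hereditary with heredity chain $0\subset \Lambda e_1\Lambda\subset\cdots\subset \Lambda e_n\Lambda=\Lambda$.
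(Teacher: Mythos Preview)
Your proof is correct and follows essentially the same strategy as the paper: use the vertex ordering from Theorem~\ref{thm vertex ordering}, and at each step of the chain verify the heredity condition via Corollary~\ref{remark32} by controlling the tips of a Gr\"obner basis for the quotient ideal $I_{\wh{i-1}}$. The paper packages the key technical step slightly differently: it inducts on the number of vertices of $Q$, removing only $v_1$ at a time, so that the relevant hypothesis is simply ``$v_1$ is not properly internal to $\cT$''; this feeds Lemma~\ref{lem-import} and Lemma~\ref{grb of Iehat}, whence Proposition~\ref{prop-quotient mon} gives $(\Lambda/\Lambda v_1\Lambda)_{Mon}\cong \Lambda_{Mon}/\Lambda_{Mon}v_1\Lambda_{Mon}$ and the induction closes via Lemma~\ref{lemma-quot}. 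Your lemma is the iterated version of Lemma~\ref{grb of Iehat} under the weaker hypothesis ``$v_j$ not properly internal to $\cT_{\wh{j-1}}$ for each $j<i$'', and your minimal-$j$ trick is exactly what replaces the paper's appeal to Lemma~\ref{lem-import} in that setting. The two arguments are thus the same in substance; the paper's one-vertex-at-a-time organisation avoids the sharper lemma at the cost of an extra layer of induction, while your version is more direct.
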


 Before proving Theorem~\ref{thm:implication}, we begin with some preliminary results.

\begin{lemma}\label{lem-import} Let $\cG$ be the reduced
\grb basis for an ideal $I$ in $KQ$ and $\cT=\tip(\cG)$.
Assume that  $v_i$, for  $1\le i\le r $, is not
properly internal to $\cT$ and set $e = v_1 + \cdots + v_r$. Then for any  $g\in\cG$,
 either  $g=g_e\in \cE$, or  $\tip(g)=\tip(g\whe)$.
\end{lemma}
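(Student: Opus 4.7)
The plan is to exploit the decomposition $KQ = KQ\whe \oplus \cE$ of Proposition~\ref{prop-decomp} together with two features of $\cG$: elements of $\cG$ are uniform, and each $g \in \cG$ has the form $g = t - n$ with $t = \tip(g) \in \cT$, $n \in \Span_K(\cN)$, and every path in the support of $n$ strictly preceding $t$ under $\succ$. Fix $g \in \cG$ and set $t = \tip(g)$. The hypothesis that no $v_i$ is properly internal to $t$ says that any $v_i$ occurring in the path $t$ must appear as the start or the end vertex of $t$. So there are two cases: either $t$ contains none of $v_1, \dots, v_r$ at all (so $t \in \cB\whe$), or $t$ begins or ends at some $v_i$ (so $t \in \cE$).

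In the first case, $t\whe = t$, and decomposing $g = t - n$ termwise yields $g\whe = t - n\whe$. Every path in the support of $n\whe$ already lies in the support of $n$, hence is $\prec t$; so the coefficient of $t$ in $g\whe$ is nonzero and $t$ is the maximal monomial appearing, giving $\tip(g\whe) = t = \tip(g)$.

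In the second case, I would appeal to the uniformity of $\cG$: writing $u$ and $u'$ for the start and end vertices of $t$, we have $g = u g u'$, so every monomial appearing in $g$ starts at $u$ and ends at $u'$. By assumption at least one of $u, u'$ lies in $\{v_1, \dots, v_r\}$, so every such monomial passes through some $v_i$ and hence belongs to $\cE$. It follows that $g \in \cE$, which forces $g\whe = 0$ and $g = g_e$, the first alternative of the conclusion.

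The only step that takes a moment's thought is Case~2, where uniformity is used to propagate membership in $\cE$ from $\tip(g)$ to all of $g$ (not just the tip); the point is that although a generic path in the support of $n$ could in principle reach from $u$ to $u'$ by snaking through various $v_i$'s, the hypothesis that \emph{one endpoint is already a $v_i$} is strong enough to force every monomial into $\cE$. Once this observation is made, both cases are immediate from the definitions.
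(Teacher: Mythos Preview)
Your proof is correct and follows the same approach as the paper: a case split on whether $t=\tip(g)$ lies in $\cE$ or in $\cB\whe$, with uniformity of $g$ handling the former case. The paper's own argument is terser---it only spells out the case $t\in\cE$ and leaves the other case implicit---so your Case~1 simply makes explicit what the paper takes for granted.
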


\begin{proof} Let $g \in \cG$ and  let $t=\tip(g)\in\cT$. Suppose that $t\in\cE$. 
There is some $i$ such that $v_i$ occurs in $t$.  Since
$v_i$  is not properly internal to $t$, $v_i$ is either the origin
vertex or end vertex of $t$.  Since $\cG$ is the reduced \grb
basis for $I$, $g$ is uniform and has $v_i$ as the origin or end vertex. Therefore no summand of $g$ is in $KQ_{\whe}$ and $g= g_e$.   Hence, $g\in\cE$ and we are
done.
\end{proof}

 \begin{lemma}\label{grb of Iehat}
  Let $\cG$ be the reduced \grb basis for an ideal $I$ in $KQ$ and let
$\cT=\tip(\cG)$.  Assume that     $v_i$, for  $1\le i\le r $,  is not
properly internal to $\cT$ and set $e = v_1 + \cdots + v_r$. Then $\cG\whe$ is a \grb  basis for $I\whe$ in $KQ\whe$.  
  \end{lemma}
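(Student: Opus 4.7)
The plan is to verify the two defining conditions of a Gröbner basis for $\cG\whe$ with respect to $I\whe$ in $KQ\whe$. The containment $\cG\whe\subseteq I\whe$ is immediate from Proposition~\ref{prop-relate}(5): any $g\in\cG\subseteq I$ decomposes as $g=g\whe+g_e$ with $g_e\in\cE$, witnessing $g\whe\in I\whe$. This gives one inclusion in $\langle\tip(\cG\whe)\rangle=\langle\tip(I\whe)\rangle$. The substantive task is the reverse: for any $p\in\tip(I\whe)$, exhibit some $g\in\cG\setminus\cE$ such that $\tip(g\whe)\mid p$ in $\cB\whe$.

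The central observation is that $\cB\whe$ and the set of paths lying in $\cE$ are disjoint, so no cancellation occurs in a sum $y=x+x'$ with $x\in KQ\whe$ and $x'\in\cE$; hence $\tip(y)=\max\{\tip(x),\tip(x')\}$ (using $\tip(0)=-\infty$). Fix $p\in\tip(I\whe)$ realized by some $x\in I\whe$ with $\tip(x)=p$, and consider the family
\[
S_p=\{\,y\in I : y=x+x',\ x\in KQ\whe,\ x'\in\cE,\ \tip(x)=p\,\}.
\]
By the definition of $I\whe$ this set is non-empty, and the well-order $\succ$ lets me choose $y=x+x'\in S_p$ minimizing $\tip(y)$.

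The key claim is that for this minimal $y$ one has $\tip(y)=p$. Suppose instead $\tip(y)=\tip(x')=q\succ p$, so $q\in\cE$. Because $\cG$ is a Gröbner basis for $I$, some $g\in\cG$ satisfies $\tip(g)\mid q$; write $q=p_1\tip(g)p_2$ and form the reduction $\tilde y=y-\alpha p_1 g p_2\in I$ with $\alpha$ chosen so the $q$-coefficient vanishes, giving $\tip(\tilde y)\prec q$. The crucial computation, where the hypothesis that the $v_i$'s are not properly internal to $\cT$ enters through Lemma~\ref{lem-import}, is that $p_1 g p_2\in\cE$: either $g\in\cE$, whence every monomial of $g$ already lies in $\cE$; or $\tip(g)\in\cB\whe$, in which case the $v_i$ witnessing $q\in\cE$ cannot come from $\tip(g)$ and so must lie in $p_1$ or $p_2$, forcing $p_1\in\cE$ or $p_2\in\cE$. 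In either subcase each monomial $p_1 m p_2$ of $p_1 g p_2$ contains some $v_i$, so $(p_1 g p_2)\whe=0$. Therefore $\tilde y\whe=x$, placing $\tilde y\in S_p$ with $\tip(\tilde y)\prec\tip(y)$, contradicting minimality.

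Once $\tip(y)=p$ is established, $p\in\tip(I)$, so some $g\in\cG$ has $\tip(g)\mid p$. Since $p\in\cB\whe$, $\tip(g)\in\cB\whe$, so $g\notin\cE$, and Lemma~\ref{lem-import} gives $\tip(g\whe)=\tip(g)\mid p$, finishing the proof. The main obstacle is the case analysis showing $p_1 g p_2\in\cE$ in the third paragraph, which is precisely where the ``not properly internal'' hypothesis on $v_1,\ldots,v_r$ is essential; the rest is bookkeeping on the direct-sum decomposition $KQ=KQ\whe\oplus\cE$.
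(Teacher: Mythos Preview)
Your proof is correct and follows essentially the same approach as the paper's: fix a nonzero element of $I\whe$, choose among all its lifts to $I$ one whose $\cE$-part has minimal tip, and argue by a single Gr\"obner reduction (using Lemma~\ref{lem-import} to guarantee the reducing term lies in $\cE$) that the minimal lift must have tip equal to $\tip(x)$, after which the Gr\"obner property of $\cG$ for $I$ finishes. The only cosmetic differences are that you package the lifts as the set $S_p\subseteq I$ rather than the paper's $Y\subseteq\cE$, you explicitly track the leading coefficient $\alpha$ in the reduction, and your case split on whether $g\in\cE$ or $\tip(g)\in\cB\whe$ is the contrapositive organization of the paper's split on whether $p_1,p_2\in\cE$ or not.
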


\begin{proof}  Assume that    $v_i$, for  $1\le i\le r $,  is not
properly internal to $\cT$.  We need to show that $\langle \tip (\cG\whe) \rangle =  \langle \tip (I\whe) \rangle$. For this it is enough to show that for any 
 $x\ne 0 \in I\whe$ there is  $g\in\cG$ and paths $p$ and $q$
in $\cB\whe$ such that $\tip(x)=p\tip(g)q$. By  Lemma~\ref{lem-import} it then follows  that $\tip (g) = \tip(g\whe)$, showing that $\cG\whe$ is a \grb basis for $I\whe$.

So let  $x\ne 0 \in I\whe$ and 
consider the set $Y=\{y\in\cE\mid x+y\in I\}$.
By Proposition \ref{prop-relate} (5) the set $Y$ is not empty.
Let $y^*\in Y$ be such that $\tip(y)\succeq \tip(y^*)$
for all $y\in Y$.
We claim that $\tip(x+y^*)=\tip(x)$.

Suppose not; that is, suppose that $\tip(x+y^*)=\tip(y^*)\in\cE$.
Then since $x+y^* \in I$, there is some $g\in\cG$ and paths $p',q'\in\cB$ such that
$$p'\tip(g)q'=\tip(x+y^*) = \tip (y^*) \in\cE.$$  
If either $p'$ or $q'$ is in $\cE$ , then
$p'gq'\in \cE\cap I$.  But then
$y^*-p'gq'\in  Y$ and $\tip(y^*)\succ \tip (y^*-p'gq')$, a
contradiction.   Thus, $p'$ and $q'$ are in $\cB\whe$
and we must have $\tip(g)\in \cE$.   By
Lemma \ref{lem-import} we have that $g\in \cE$.
But we again have $p'gq'\in\cE\cap I$ and
$y^*-p'gq'\in  Y$ and $\tip(y^*)\succ \tip (y^*-p'gq')$, a
contradiction. This proves the claim that
$\tip(x+y^*)=\tip(x)$.

Since $x+y^*\in I$, there is some $g\in\cG$ and paths
$p,q  \in \cB$ such that $p\tip(g)q=\tip(x+y^*)=\tip(x)$.
Since $x\in KQ\whe$,  we have that $\tip(g) = \tip(g\whe)$ and hence $g \in \cG\whe$.  This finishes
the proof that $\cG\whe$ is  a  \grb basis for $I\whe$.
\end{proof}

\begin{proposition}\label{prop-quotient mon}
Let $\cG$ be the reduced \grb basis of $I$ and let
$\cT=\tip(\cG)$.  Assume that  $v_i$, for  $1\le i\le r $,  is not
properly internal to $\cT$ and set $e = v_1 + \cdots + v_r$.
 Then 
$$ (\Lambda / \Lambda e \Lambda)_{Mon} \simeq \Lambda_{Mon} /  \Lambda_{Mon} e \Lambda_{Mon}.$$
\end{proposition}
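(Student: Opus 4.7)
The plan is to show that both sides of the claimed isomorphism are canonically isomorphic to the same concrete algebra $KQ\whe/\langle \cT\whe\rangle$, where $\cT\whe$ denotes the subset of $\cT$ consisting of paths that avoid every $v_i$ (equivalently, the nonzero elements of $\{t\whe : t\in\cT\}$). The only ingredients are Corollary~\ref{alg-iso} applied separately to $\Lambda$ and to $\Lambda_{Mon}$, together with Lemma~\ref{grb of Iehat} and Lemma~\ref{lem-import} for tracking the reduced Gr\"obner basis under $\whe$.

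First I would handle the right-hand side, which is the easier of the two. Since $\Lambda_{Mon} = KQ/\langle\cT\rangle$ is monomial with admissible ideal ($I$ admissible implies $I_{Mon}$ admissible), applying Corollary~\ref{alg-iso} gives $\Lambda_{Mon}/\Lambda_{Mon}e\Lambda_{Mon} \simeq KQ\whe/\langle\cT\rangle\whe$, and Remark~\ref{monomial hat ideal} rewrites the denominator as $\langle\cT\whe\rangle$. For the left-hand side I would first apply Corollary~\ref{alg-iso} to $\Lambda$ itself, obtaining $\Lambda/\Lambda e\Lambda \simeq KQ\whe/I\whe$, so that $(\Lambda/\Lambda e\Lambda)_{Mon} \simeq KQ\whe/\langle\tip(I\whe)\rangle$. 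The remaining task is then to identify $\langle\tip(I\whe)\rangle$ with $\langle\cT\whe\rangle$. Lemma~\ref{grb of Iehat} (which uses the non-internal hypothesis) provides that $\cG\whe$ is a Gr\"obner basis for $I\whe$, whence $\langle\tip(I\whe)\rangle = \langle\tip(\cG\whe)\rangle$. Lemma~\ref{lem-import} then splits $\cG$ into elements lying entirely in $\cE$ (whose $\whe$-part vanishes and hence contribute nothing to $\cG\whe$) and elements with $\tip(g) = \tip(g\whe)\in\cB\whe$; consequently $\tip(\cG\whe) = \cT \cap \cB\whe = \cT\whe$, matching the right-hand side.

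The main obstacle is precisely the identification $\tip(\cG\whe) = \cT\whe$, and this is where the hypothesis that no $v_i$ is properly internal to $\cT$ does the real work: a priori, replacing $g$ by $g\whe$ could lower the tip and produce new tips outside $\cT$, in which case $\langle\tip(I\whe)\rangle$ could strictly contain $\langle\cT\whe\rangle$ and the proposition would fail. Since Lemma~\ref{lem-import} and Lemma~\ref{grb of Iehat} already absorb this subtlety, the actual argument reduces to a short chain of canonical isomorphisms, and no further calculation is needed beyond assembling these pieces.
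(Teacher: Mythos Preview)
Your proposal is correct and follows essentially the same route as the paper's own proof: both sides are identified with $KQ\whe/\langle\cT\whe\rangle$ via Corollary~\ref{alg-iso}, and Lemma~\ref{grb of Iehat} is invoked to show that $\cG\whe$ is a \grb basis for $I\whe$, so that $\langle\tip(I\whe)\rangle=\langle\cT\whe\rangle$. Your explicit use of Lemma~\ref{lem-import} to justify $\tip(\cG\whe)=\cT\whe$ merely spells out a step the paper leaves implicit.
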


\begin{proof}  We have that $I_{Mon} = \langle \cT \rangle$ and $\cT = \cT_{\widehat{e}} \cup \cT_e$ where 
$\cT_e =  \cT \cap  \cE.$ 
Furthermore, by Corollary \ref{alg-iso}, we have
$\Lambda_{Mon} /  \Lambda_{Mon} e \Lambda_{Mon} \simeq KQ_{\widehat{e}}/ \langle \cT \rangle_{\widehat{ e}} $ 
and $\Lambda/\Lambda e \Lambda \simeq KQ\whe/I\whe$.
 Now by Lemma~\ref{grb of Iehat},  the set
$\cG\whe$ is a \grb
basis of $I\whe$ and hence $\tip (\cG\whe) = \cT\whe$   is a \grb basis of $(I\whe)_{Mon}$ and the result follows. 
\end{proof}

 Since it is a crucial point in the proof of Theorem~\ref{thm:implication},  we recall the following well-known result.

\begin{lemma}\label{lemma-quot} Let $\Lambda=KQ/I$ and suppose that \[(*)\quad\quad (0)\subset L_1 \subset L_2 \subset \cdots
\subset L_{k-1}\subset L_k=\Lambda\] is a chain of ideals in $\Lambda$.
Then $(*)$ is a heredity chain for $\Lambda$ if and only if
$L_1$ is a heredity ideal in $\Lambda$ and
\[ (0)\subset L_2/L_1 \subset L_3/L_1 \subset\cdots
  \subset L_{k-1}/L_1 \subset L_k/L_1 =\Lambda/L_1\] is a heredity 
chain for $\Lambda/L_1$.
\qed\end{lemma}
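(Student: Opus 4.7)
The plan is to reduce this to a direct application of the third isomorphism theorem combined with the recursive nature of the definition of a heredity chain. By definition, the chain $(*)$ is a heredity chain precisely when $L_i/L_{i-1}$ is a heredity ideal in $\Lambda/L_{i-1}$ for every $i=1,\dots,k$. The $i=1$ condition says exactly that $L_1$ is a heredity ideal in $\Lambda$ (using $L_0=0$), so this part of the equivalence is immediate and can be peeled off first.

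For the remaining conditions ($i=2,\dots,k$), I would invoke the third isomorphism theorem to produce, for each such $i$, canonical algebra isomorphisms
\[
(\Lambda/L_1)\big/(L_{i-1}/L_1)\;\xrightarrow{\;\sim\;}\;\Lambda/L_{i-1},\qquad
(L_i/L_1)\big/(L_{i-1}/L_1)\;\xrightarrow{\;\sim\;}\;L_i/L_{i-1},
\]
the second of which identifies the ideal on the left with the ideal on the right under the isomorphism of ambient algebras. I would then observe that all three defining conditions of a heredity ideal, namely $L^2=L$, $L\cdot J(\Lambda)\cdot L=0$, and $L$ projective as a $\Lambda$-module, are intrinsic to the pair (ideal, algebra) and are therefore transported by any algebra isomorphism (using that isomorphisms carry Jacobson radicals to Jacobson radicals). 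Consequently $L_i/L_{i-1}$ is a heredity ideal in $\Lambda/L_{i-1}$ if and only if $(L_i/L_1)/(L_{i-1}/L_1)$ is a heredity ideal in $(\Lambda/L_1)/(L_{i-1}/L_1)$.

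Assembling the equivalences for $i=2,\dots,k$ yields precisely the condition that the shortened chain $(0)\subset L_2/L_1\subset\cdots\subset L_k/L_1=\Lambda/L_1$ is a heredity chain for $\Lambda/L_1$, which, together with heredity of $L_1$ in $\Lambda$ obtained from $i=1$, is equivalent to $(*)$ being a heredity chain. The only point requiring care is the transport of the heredity conditions across the isomorphism of quotients; this is the sole potential obstacle, but it follows at once from the intrinsic formulation of the three axioms and the fact that algebra isomorphisms preserve each of them.
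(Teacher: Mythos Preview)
Your argument is correct and is exactly the standard verification: peel off the $i=1$ case, then for $i\ge 2$ use the third isomorphism theorem to identify the pair $\bigl((L_i/L_1)/(L_{i-1}/L_1),\,(\Lambda/L_1)/(L_{i-1}/L_1)\bigr)$ with $\bigl(L_i/L_{i-1},\,\Lambda/L_{i-1}\bigr)$, and observe that each of the three heredity axioms is invariant under algebra isomorphism. There is nothing to compare against: the paper states this lemma as well known and offers no proof (the \qed immediately follows the statement), so your write-up simply supplies the routine details the authors chose to omit.
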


{\it Proof of Theorem~\ref{thm:implication}.}
Assume $\Lambda_{Mon}$ is quasi-hereditary and that 
$$0 \subset \Lambda_{Mon} e_1 \Lambda_{Mon} \subset \Lambda_{Mon} e_2 \Lambda_{Mon} \subset \cdots \subset \Lambda_{Mon}$$ 
is a heredity chain for $\Lambda_{Mon}$. By   Theorem~\ref{thm vertex ordering}  we can assume that $e_i = e_{i-1} + v$ for some vertex $v \in Q_0$ and in particular, $e_1 = v_1$. 
We show that 
$$0 \subset \Lambda e_1 \Lambda \subset \Lambda e_2 \Lambda \subset \cdots \subset \Lambda$$ 
is a heredity chain for $\Lambda$. 
We proceed by induction on the number of vertices in $Q$.
If $Q$ has one vertex, the
result is vacuously true.  Assume the result is true for quivers
with $k-1$ vertices and that $Q$ has $k$ vertices and $\Lambda_{Mon}=KQ/(I_{Mon})$ is
quasi-hereditary. 

 Since  $\Lambda_{Mon} v_1\Lambda_{Mon}$ is a hereditary ideal in
$\Lambda_{Mon}$, $v_1$ is not properly internal to $\cT=\tip(\cG)$   where $\cG$ is the reduced \grb basis for $I$. By
Corollary \ref{remark32}, the ideal 
$\Lambda v_1 \Lambda$ is a heredity ideal in $\Lambda$.

We have that $\Lambda_{Mon} $ quasi-hereditary implies $\Lambda_{Mon} / 
\Lambda_{Mon} v_1 \Lambda_{Mon}$
quasi-hereditary and 
by Proposition~\ref{prop-quotient mon}, we have that $ \Lambda_{Mon} / 
\Lambda_{Mon} v_1 \Lambda_{Mon} \simeq (\Lambda / \Lambda v_1 \Lambda)_{Mon} $. By Lemma \ref{lemma-quot},
\[0\subset
\Lambda_{Mon}e_2\Lambda_{Mon}/\Lambda_{Mon}v_1\Lambda_{Mon}\subset\cdots
\subset  \Lambda_{Mon}/\Lambda_{Mon}v_1\Lambda_{Mon} \]
is a heredity chain for $\Lambda_{Mon} /\Lambda_{Mon} v_1\Lambda_{Mon}$.

Since $\Lambda / 
\Lambda v_1 \Lambda$ has strictly fewer vertices than  $\Lambda$, by induction on the number of vertices $\Lambda / 
\Lambda v_1 \Lambda$ is quasi-hereditary with heredity chain 
\[\Lambda e_2\Lambda/\Lambda v_1\Lambda\subset\cdots
\subset \Lambda/\Lambda v_1\Lambda.
\]
Applying Lemma \ref{lemma-quot} again, we conclude that
$\Lambda$ is quasi-hereditary.
\qed  
\vskip .2in

The following example illustrates how the above results
can be applied.

\begin{Example}{\rm

Let $Q$ be the quiver
\[\xymatrix{
&v_3\ar[dr]_d\ar[drr]^f\\
v_1\ar[ur]^c\ar[r]^a&v_2\ar[r]^b&v_4\ar[dl]_e&v_5\ar[lld]^g\\
&v_6\ar[lu]^h}
\]
Let $\succ$ be the length-(left)lexicographic order with
$a\succ b\succ c\succ\cdots\succ f\succ g$. Let
$\cT=\{ab, be, de, eh, hc\}$.  First we show that
the  monomial algebra $\Lambda= KQ/\langle\cT\rangle$ is
quasi-hereditary.

 The vertices $v_1, v_2, v_4$ and $v_6$ are properly internal
to $\cT$ and $v_3$ and $v_5$ are  not properly internal to any path in $\cT$.  We choose as first vertex in our vertex ordering $v_3$. Hence,  $\Lambda v_3 \Lambda$ is a heredity  ideal
in $\Lambda$.
By our earlier results $\Lambda /\Lambda v_3\Lambda$
is isomorphic to $KQ_{\widehat v_3}/\cT_{\widehat v_3}$ where $Q_{\widehat v_3}$ is
\[\xymatrix{
v_1\ar[r]^a&v_2\ar[r]^b&v_4\ar[dl]_e&v_5\ar[lld]^g\\
&v_6\ar[lu]^h}s
\] and $\cT_{\widehat v_3}=\{ab, be, eh\}$.   Now, for example, vertex
$v_1$ is not properly internal to $\cT_{\widehat v_3}$.  Thus $(\Lambda/\Lambda v_3\Lambda)v_1
(\Lambda/\Lambda v_3\Lambda)$ is a heredity ideal in
$(\Lambda/\Lambda v_3\Lambda)$. Continuing one obtains
a heredity chain
\[ 0\subset \Lambda v_3\Lambda\subset\Lambda v_3+v_1\Lambda\subset\Lambda v_3+v_1+v_2\Lambda\subset
\Lambda v_3+v_1+v_2	+v_4\Lambda\subset
\Lambda v_3+v_1+v_2	+v_4+v_5\Lambda\subset\]\[
\Lambda v_3+v_1+v_2	+v_4+v_5+v_6\Lambda=\Lambda
\]
It follows from Theorem \ref{thm:implication} that if  $\Gamma = KQ/J$ with $J$ admissible and such that  $\Gamma_{Mon} = \Lambda$ then $\Gamma$ is quasi-hereditary.   Using the
results in \cite{GHS}, the  reduced \grb basis is  of the form  $$\{ t - \sum_{t \succ n_i, t || n_i} X_i n_i \mid t \in \cT, X_i \in K \}.$$
Thus in our case we see that $\Gamma$ necessarily is of the form
$\Gamma=KQ/\langle g_1, g_2, \dots,g_5\rangle$ with
$g_1=ab-Xcd, g_2=be, g_3= de - Y fg, g_4=eh, g_5=hc$, and
$X, Y\in K$ are arbitrary. In particular, in this case there are only four isomorphism classes of algebras, namely, if we denote by $\Gamma_{X,Y}$ the algebra with parameters $X$ and $Y$, then representatives of the isomorphism classes can be given by 
$X$ equal $0$ or $1$ and $Y$ equal $ 0$ or $1$. 
}
\end{Example}

\bibliographystyle{plain}

\begin{thebibliography}{99}
\bibitem{A} Achar, Pramod N.
Perverse coherent sheaves on the nilpotent cone in good characteristic.  Recent developments in Lie algebras, groups and representation theory, 1--23, 
Proc. Sympos. Pure Math., 86, Amer. Math. Soc., Providence, RI, 2012. 
\bibitem{AG} Anick, David J.; Green, Edward L. On the homology of quotients of path algebras. Comm. Algebra 15 (1987), no. 1-2, 309--341.
\bibitem{APT} Auslander, Maurice;  Platzeck, Maria Ines;  Todorov, Gordana.  Homological theory of idempotent ideals. Trans. Amer. Math. Soc. 332 (1992), 667--692 
\bibitem{BB}  Beilinson, A.; Bernstein, J.. Localisation de g-modules, C. R. Acad. Sci. Paris S\'er. I Math. 292 (1981), no. 1, 15--18.
\bibitem{B} Bergman, George M. The diamond lemma for ring theory. Adv. in Math. 29 (1978), no. 2, 178--218.
\bibitem{BK}  Brylinski, J.-L.; Kashiwara, M. Kazhdan-Lusztig conjecture and holonomic systems, Invent. Math. 64 (1981), no. 3, 387--410.
\bibitem{BLvdB} Buchweitz, Ragnar-Olaf; Leuschke, Graham J.; Van den Bergh, Michel.
On the derived category of Grassmannians in arbitrary characteristic. 
Compos. Math. 151 (2015), no. 7, 1242--1264. 
\bibitem{CS} Chouhy, Sergio; Solotar, Andrea. Projective resolutions of associative algebras and ambiguities. J. Algebra 432 (2015), 22--61.
\bibitem{CPS} Cline, E.; Parshall, B.; Scott, L. Finite-dimensional algebras and highest weight categories. J. Reine Angew. Math. 391 (1988), 85--99. 
\bibitem{DR} Dlab, Vlastimil; Ringel, Claus Michael. Quasi-hereditary algebras. Illinois J. Math. 33 (1989), no. 2, 280--291.
\bibitem{DR2} Dlab, Vlastimil; Ringel, Claus Michael. The module theoretical approach to quasi-hereditary algebras. Representations of algebras and related topics (Kyoto, 1990), 200--224, London Math. Soc. Lecture Note Ser., 168, Cambridge Univ. Press, Cambridge, 1992.
\bibitem{G1} Green, Edward L. Noncommutative Gr\"obner bases, and projective resolutions. Computational methods for representations of groups and algebras (Essen, 1997), 29--60, Progr. Math., 173, Birkh\"auser, Basel, 1999.
\bibitem{G} Green, Edward L. Multiplicative bases, Gr\"obner bases, and right Gr\"obner bases. Symbolic compu-
tation in algebra, analysis, and geometry (Berkeley, CA, 1998). J. Symbolic Comput. 29 (2000),
no. 4-5, 601--623.
\bibitem{GH} Green, Edward; Huang, Rosa Q. Projective resolutions of straightening closed algebras generated
by minors. Adv. Math. 110 (1995), no. 2, 314--333.
\bibitem{GHZ} Green, Edward L.; Happel, Dieter.; Zacharia, Dan. Projective resolutions over Artin algebras with zero relations.
Illinois J. Math. 29 (1985), no. 1, 180--90.
\bibitem{GHS}  Green, Edward L.;  Hille, Lutz;   Schroll, Sibylle. 
Algebras and varieties,  arXiv:1707.07877.
\bibitem{GS} Green, Edward L.; Solberg, \O yvind. An algorithmic approach to resolutions. J. Symbolic Comput.
42 (2007), no. 11-12, 1012--1033.
\bibitem{HP} Hille, Lutz; Perling, Markus. Tilting bundles on rational surfaces and quasi-hereditary algebras. 
Ann. Inst. Fourier (Grenoble) 64 (2014), no. 2, 625--644. 
\bibitem{I} Iyama, Osamu.
Finiteness of representation dimension. 
Proc. Amer. Math. Soc. 131 (2003), no. 4, 1011--1014. 
\bibitem{IR} Iyama, Osamu; Reiten, Idun.
2-Auslander algebras associated with reduced words in Coxeter groups. 
Int. Math. Res. Not. IMRN 2011, no. 8, 1782--1803. 
\bibitem{RvdB} Raedschelders, Theo; Van den Bergh, Michel. The Manin Hopf algebra of a Koszul Artin-Schelter regular algebra is quasi-hereditary. Adv. Math. 305 (2017), 601--660.
\bibitem{R} Ringel, Claus Michael.
Iyama's finiteness theorem via strongly quasi-hereditary algebras. 
J. Pure Appl. Algebra 214 (2010), no. 9, 1687--1692. 
\bibitem{Sc} Scott, Leonard L.
Simulating algebraic geometry with algebra. I. The algebraic theory of derived categories. The Arcata Conference on Representations of Finite Groups (Arcata, Calif., 1986), 271--281, 
Proc. Sympos. Pure Math., 47, Part 2, Amer. Math. Soc., Providence, RI, 1987.
\bibitem{W} Wilson, George V. The Cartan map on categories of graded modules. J. Algebra 85 (1983), no. 2,
390--398.
\end{thebibliography}

\end{document}